\documentclass[11pt]{amsart}
\usepackage[utf8]{inputenc}
\usepackage{amscd,amssymb}
\usepackage{amsthm,amsmath,amssymb, mathtools}
\usepackage[matrix,arrow, curve]{xy}
\usepackage{enumerate}
%\usepackage[backend=bibtex, style=alphabetic,sorting=nyt, maxnames=5]{biblatex}
%\usepackage[backend=biber,style=trad-abbrv,url=false,doi=false,isbn=false,sorting=none,maxbibnames=5]{biblatex}
%\renewbibmacro{in:}{}
%\AtEveryBibitem{\clearfield{month}}
%\AtEveryBibitem{\clearfield{doi}}
%\AtEveryBibitem{\clearfield{url}}
%\AtEveryBibitem{\clearfield{issn}}
%\AtEveryBibitem{\clearfield{archivePrefix}}
%\AtEveryBibitem{\clearfield{primaryClass}}
%\AtEveryBibitem{\clearfield{isbn}}
%\addbibresource{bibliography.bib}

\sloppy\pagestyle{plain}

\textwidth=16cm \textheight=22cm

\addtolength{\topmargin}{-1cm} \addtolength{\oddsidemargin}{-2cm}
\addtolength{\evensidemargin}{-2cm}

\newtheorem{theorem}{Theorem}[section]

\newtheorem{lemma}[theorem]{Lemma}
\newtheorem{corollary}[theorem]{Corollary}
\newtheorem{conjecture}[theorem]{Conjecture}
\newtheorem*{conjecture*}{Conjecture}

\theoremstyle{definition}

\newtheorem{definition}[theorem]{Definition}
\newtheorem{question}[theorem]{Question}

\theoremstyle{remark}
\newtheorem{remark}[theorem]{Remark}

\makeatletter\@addtoreset{equation}{section} \makeatother
\newcommand{\DF}{{\mathrm{DF}}}
\newcommand{\Amp}{{\mathrm{Amp}}}
\newcommand{\Sesh}{{\mathrm{Sesh}}}
\newcommand{\StabK}{{\mathrm{Amp}}^K}
\newcommand{\rk}{{\mathrm{rk}}}
\newcommand{\Pic}{{\mathrm{Pic}}}
\newcommand{\NE}{{\overline{\mathrm{NE}}}}
\newcommand{\Nef}{{\overline{\mathrm{Nef}}}}

\author{Jesus Martinez-Garcia}
\address{Department of Mathematical Sciences, University of Essex.}
\email{jesus.martinez-garcia@essex.ac.uk}

\title{Constant scalar curvature K\"ahler metrics on  rational surfaces}
\date{15/01/2018}
\keywords{K-stability, constant scalar curvature K\"ahler metrics, rational surfaces, slope stability, Calabi dream manifolds,  automorphism groups.}

\pagestyle{headings}

\begin{document}

\begin{abstract}
We consider projective rational strong Calabi dream surfaces: projective smooth rational surfaces which admit a constant scalar curvature K\"ahler metric for every K\"ahler class. We show that there are only two such rational surfaces, namely the projective plane and the quadric surface. In particular, we show that all rational surfaces other than those two admit a destabilising slope test configuration for some polarization, as introduced by Ross and Thomas. We further show that all Hirzebruch surfaces other than the quadric surface and all rational surfaces with Picard rank $3$ do not admit a constant scalar curvature K\"ahler metric in any K\"ahler class.
\end{abstract}

\dedicatory{In memory of Carmen Garc\'ia Busnadiego}

\sloppy

\maketitle
\tableofcontents
All varieties are assumed to be algebraic, projective and defined over $\mathbb{C}$.

\section{Introduction}
\label{section:intro}
The problem of determining the existence of constant scalar curvature K\"ahler metrics (cscK metrics for short) on projective manifolds is a driving force in complex geometry, which goes back to Calabi's seminal work \cite{Calabi-initial-problem,calabi-extremal-kahler-metrics1}. It has been known for some time that cscK metrics are essentially unique in their K\"ahler class, when they exist \cite{Berman-Berndtsson-uniqueness-extremal, Donaldson-uniqueness-cscK, Chen-Tian-uniqueness-cscK}. This problem has long been expected to have an algebraic formulation, due to the Yau-Tian-Donaldson conjecture:
\begin{conjecture}[{Yau--Tian--Donaldson \cite{TianKEimpliesAnalyticKstability}}]
\label{conjecture:YTD}
Let $X$ be a smooth variety, and let $L$ be an ample line bundle on $X$. Then $X$ admits a constant scalar curvature K\"ahler (cscK) metric in $c_1(L)$ if and only if the pair $(X,L)$ is K-polystable.
\end{conjecture}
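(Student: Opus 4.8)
The plan is to establish the two implications by entirely different mechanisms, since they are governed by completely separate structures. Throughout I would fix a Kähler class $c_1(L)$ and work on the space $\mathcal{H}$ of Kähler potentials, equipped with the Mabuchi K-energy functional $\mathcal{M}$, whose critical points are precisely the cscK metrics in $c_1(L)$. The whole strategy rests on translating the algebraic notion of K-polystability, phrased in terms of the sign of Donaldson--Futaki invariants of test configurations, into the analytic behaviour of $\mathcal{M}$ on $\mathcal{H}$.

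For the necessity direction (cscK $\Rightarrow$ K-polystable), I would first use that the existence of a cscK metric forces $\mathcal{M}$ to attain its minimum on $\mathcal{H}$; by the uniqueness results \cite{Berman-Berndtsson-uniqueness-extremal, Donaldson-uniqueness-cscK, Chen-Tian-uniqueness-cscK} together with the convexity of $\mathcal{M}$ along $C^{1,1}$ geodesics, this minimiser is unique modulo the action of $\mathrm{Aut}(X,L)$. The key analytic input is then the formula, due to Phong--Ross--Sturm and refined by Berman and by Sjöström Dyrefelt, identifying the asymptotic slope of $\mathcal{M}$ along the weak geodesic ray associated to a test configuration with a positive multiple of that test configuration's Donaldson--Futaki invariant. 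Since $\mathcal{M}$ is bounded below, every such slope is non-negative, which yields K-semistability; the vanishing case forces the geodesic ray to be induced by a one-parameter subgroup of $\mathrm{Aut}(X,L)$, upgrading semistability to polystability.

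The sufficiency direction (K-polystable $\Rightarrow$ cscK) is where the genuine difficulty lies, and where a proof in full generality remains out of reach. Here the strategy is to promote the algebraic stability to the analytic coercivity (properness) of $\mathcal{M}$ modulo $\mathrm{Aut}(X,L)$, and then to invoke the a priori estimates of Chen--Cheng to solve the fourth-order cscK equation. Concretely one runs a continuity path and must rule out blow-up of the approximating metrics by controlling their Gromov--Hausdorff limits; this is precisely the point at which a partial $C^0$-estimate has to be established, identifying any limiting degeneration with an algebraic test configuration whose Donaldson--Futaki invariant would then contradict K-polystability.

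I expect this last point to be the main obstacle. The partial $C^0$-estimate is available in the Fano setting $L = -K_X$ through the work of Chen--Donaldson--Sun and of Tian, but for a general polarisation $L$ it is not known, and bridging the gap between the purely algebraic non-negativity of Donaldson--Futaki invariants and the analytic properness needed to run the PDE machinery is the heart of the conjecture. For the applications in the present paper, however, only the necessity direction is required: to show that a given rational surface carries no cscK metric in some class it suffices, by that direction, to exhibit a single destabilising test configuration, and the Ross--Thomas slope test configurations supply exactly such explicit destabilisers.
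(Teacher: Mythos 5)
The statement you were asked to prove is labelled a \emph{conjecture} in the paper, and the paper contains no proof of it: this is the Yau--Tian--Donaldson conjecture, which remains open for general polarisations. So there is nothing to compare your attempt against, and indeed your proposal is not (and could not be) a complete proof --- to your credit, you say so explicitly. What the paper actually does is use the two fragments of the conjecture that \emph{are} theorems, both as black boxes: the necessity direction ``cscK $\Rightarrow$ K-polystable'', due to Berman--Darvas--Lu following Darvas--Rubinstein \cite{Berman-Darvas-Lu-csck-implies-K-stability, Darvas-Rubinstein-csck-implies-K-stability}, which is invoked in Lemma \ref{lemma:k-polystable-minimal} and in the proof of Theorem \ref{theorem:main}; and the toric-surface case of the full equivalence, due to Donaldson \cite{Donaldson-toric-surfaces-K-stability}, which is invoked in the proof of Theorem \ref{theorem:low-rank}. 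Your closing observation that only the necessity direction (plus explicit Ross--Thomas destabilisers) is needed for the paper's negative results is exactly right, except that the toric case of the \emph{sufficiency} direction is also used once, in Theorem \ref{theorem:low-rank}, to convert non-existence of cscK metrics into K-instability.

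On the mathematical content of your sketch: the necessity direction is broadly the strategy of the cited works, but one step is stated imprecisely. The asymptotic slope of the Mabuchi functional along the weak geodesic ray attached to a test configuration is not in general a positive multiple of the Donaldson--Futaki invariant; it equals the non-Archimedean Mabuchi functional, which is bounded \emph{above} by $\mathrm{DF}$, with equality only when the central fibre is reduced. The inequality still runs the right way (slope $\geqslant 0$ and slope $\leqslant \mathrm{DF}$ gives $\mathrm{DF}\geqslant 0$), so K-semistability follows, but the upgrade from semistability to polystability in the vanishing case is the genuinely hard part of \cite{Berman-Darvas-Lu-csck-implies-K-stability} and cannot be dismissed in one sentence. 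For the sufficiency direction you correctly identify the obstruction: the partial $C^0$-estimate and the passage from algebraic stability to properness of the K-energy are known only in special settings (Fano, toric), and this is precisely why the statement stands as a conjecture in the paper rather than a theorem.
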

%In the case of toric surfaces, the conjecture follows from the classical results of Donaldson \cite{Donaldson-toric-surfaces-K-stability} and recent results of Codogni and Stoppa \cite{CS16}.
%For smooth Fano varieties polarized by their anticanonical line bundle, Conjecture \ref{conjecture:YTD} was recently proved by Chen, Donaldson and Sun in \cite{Chen-Donaldson-Sun-Kstability, Chen-Donaldson-Sun-Kstability1, Chen-Donaldson-Sun-Kstability2, Chen-Donaldson-Sun-Kstability3} and Tian \cite{Tian-K-stability-solution, Tian-K-stability-correction}.
It is known in different degrees of generality that K-polystability is a necessary condition for the existence of a cscK metric, with the most general result due to Berman, Darvas and Lu \cite{Berman-Darvas-Lu-csck-implies-K-stability} following work of Darvas and Rubinstein \cite{Darvas-Rubinstein-csck-implies-K-stability}.

Even if Conjecture \ref{conjecture:YTD} holds in all generality, testing which particular polarisations are K-polystable is not an easy task. Moreover, very little is known on how K-stability varies in $\Pic(X)\otimes \mathbb Q$, although in \cite{lebrun-simanca-opennes-k-stability} it was shown that if $\mathrm{Aut}(X)$ is finite, then the set of K-stable polarisations form a (possibly empty) open set. Hence, it is natural to introduce the \emph{set of K-stable polarisations} of a projective manifold $X$:
$$\StabK(X)=\{L\in\Amp(X)\otimes \mathbb Q	\ | \ (X,L) \text{ is K-polystable}\}.$$
In this article we consider the following natural question:
\begin{question}
\label{que:main}
Which manifolds satisfy $\StabK(X)=\Amp^{\mathbb Q}(X)$? Or analogously, which manifolds admit a cscK metric for all ample line bundles?
\end{question}

We answer this question for rational surfaces:
\begin{theorem}
\label{theorem:main}
Given a rational projective surface $S$, $\StabK(S)=\Amp^{\mathbb Q}(S)$ if and only if $S= \mathbb P^2$ or $S=\mathbb P^1\times\mathbb P^1$. 
\end{theorem}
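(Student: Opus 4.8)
\emph{Proof strategy.} I would prove the two implications separately, the forward one being short. For the ``if'' direction it suffices, by the known implication that the existence of a cscK metric forces K-polystability (Berman--Darvas--Lu), to exhibit a cscK metric in every K\"ahler class. On $\mathbb P^2$ the ample $\mathbb Q$-cone is the single ray $\mathbb Q_{>0}\,c_1(\mathcal O(1))$, and the Fubini--Study metric is K\"ahler--Einstein, hence cscK; rescaling covers the whole ray. On $\mathbb P^1\times\mathbb P^1$ every K\"ahler class is $a\,[\omega_1]+b\,[\omega_2]$ with $a,b>0$, where $\omega_i$ is the round form on the $i$-th factor, and the product metric $a\,g_1\oplus b\,g_2$ has scalar curvature equal to the sum of the two (constant) factor curvatures, hence is cscK in that class. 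This gives $\StabK(S)=\Amp^{\mathbb Q}(S)$ for both.

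\textbf{Reduction for the converse.} For the ``only if'' direction I would argue contrapositively: if $S\neq\mathbb P^2,\mathbb P^1\times\mathbb P^1$, then some polarization is not K-polystable. By the classification of rational surfaces, $\rho(S)=1$ forces $S=\mathbb P^2$, while $\rho(S)=2$ forces $S$ to be a Hirzebruch surface $\mathbb F_n$; and any $S$ with $\rho(S)\ge 3$ is non-minimal, so it carries a $(-1)$-curve $E$ together with a blow-down $\pi\colon S\to S'$ contracting it. Thus it suffices to produce a destabilising slope test configuration, in the sense of Ross--Thomas, for \emph{some} polarization on (i) each $\mathbb F_n$ with $n\ge 1$, and (ii) each $S$ with $\rho(S)\ge 3$; slope instability then yields K-instability, and by Berman--Darvas--Lu the absence of a cscK metric in that class.

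\textbf{Destabilising curves.} In case (i) the destabiliser is the negative section $C_0$, with $C_0^2=-n$ and $g(C_0)=0$. Writing a polarization as $L=C_0+bf$ with $b>n$ (so that $L^2=2b-n$, $L\cdot C_0=b-n$ and $-K_{\mathbb F_n}\cdot L=2b-n+2$) and computing the Ross--Thomas slope of $\mathcal I_{C_0}$ through the deformation to its normal cone, I would express slope semistability as an explicit inequality in these intersection numbers and the genus, valid for all Seshadri parameters $c\in(0,\epsilon(C_0))$; for $n\ge 1$ I expect it to fail for $b$ close to $n$, where $C_0$ has small volume. In case (ii) I would polarise $S$ by $\hat L=\pi^*L-\delta E$ with $\delta>0$ small (ample for small $\delta$), so that $\hat L\cdot E=\delta\to 0$ while $E^2=-1$, $g(E)=0$ and $K_S\cdot E=-1$; the same slope inequality, applied to $E$, fails as $\delta\to 0$, since the slope of $\mathcal I_E$ grows without bound while $\mu(\mathcal O_S)$ stays finite. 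As every surface with $\rho(S)\ge 3$ carries such an $E$, this settles (ii) uniformly in the Picard rank.

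\textbf{Main obstacle.} The crux is the slope computation itself: determining the Seshadri constant (the nef threshold) of the destabilising curve across the relevant two-parameter family of polarizations, and then verifying that the Ross--Thomas inequality is \emph{genuinely} violated for some admissible parameter rather than merely tight. For the Hirzebruch family this is a clean but delicate optimisation in the pair $(b,c)$; for the blow-up case the subtle point is to confirm that a $(-1)$-curve of vanishing volume always destabilises independently of the geometry of $S'$, which is precisely what lets a single mechanism cover every Picard rank $\rho(S)\ge 3$ at once.
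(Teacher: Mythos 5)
Your ``if'' direction and your case (i) are sound and match the paper: Fubini--Study and product metrics plus Berman--Darvas--Lu handle $\mathbb P^2$ and $\mathbb P^1\times\mathbb P^1$, and the slope test configuration centred at the negative section does destabilise $\mathbb F_n$ for $n\geqslant 1$ (the paper's Lemma \ref{lemma:Hirzebruch} in fact shows this for \emph{every} polarization, not just $b$ near $n$). The genuine gap is in case (ii). Your claim that ``a $(-1)$-curve of vanishing volume always destabilises independently of the geometry of $S'$'' is false. Plug $Z=E$, $E^2=-1$, $g(E)=0$, $\hat L\cdot E=\delta$ into Lemma \ref{lemma:DF-slope}: one gets $\mathrm{DF}(\mathcal X,\mathcal L_\lambda)=\frac{2}{3}\nu(\hat L)\left(-\lambda^3-3\lambda^2\delta\right)+2\lambda^2+2\lambda\delta$. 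As $\delta\to 0^+$ nothing blows up: $\nu(\hat L)\to\nu(L)$ is finite and $\Sesh(S,\hat L,E)\to\epsilon_0>0$ is finite, so the limit is $2\lambda^2\bigl(1-\tfrac{1}{3}\nu(L)\lambda\bigr)$, which is negative only if $\nu(L)\lambda>3$ for some admissible $\lambda<\epsilon_0$ --- a genuine condition on $(S',L)$, not automatic; on rational surfaces obtained by blowing up many points one can even have $\nu(\hat L)\leqslant 0$, making $\mathrm{DF}>0$ for all admissible $\lambda$. Worse, your mechanism is obstructed by actual metrics: if $S'$ is a del Pezzo surface of degree $\leqslant 4$ (K\"ahler--Einstein, with discrete automorphisms for $\rk(\Pic(S'))\geqslant 5$), then by Arezzo--Pacard the blow-up $S$ admits a cscK metric precisely in the classes $\pi^*L-\delta E$ with $\delta$ small, so those pairs are K-polystable and admit \emph{no} destabilising test configuration whatsoever. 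So the small-volume $(-1)$-curve cannot serve as a uniform destabiliser across all $\rk(\Pic(S))\geqslant 3$.

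The paper's route around this is different and is where the real work lies: it keeps the destabilising curve equal to (the proper transform of) the negative section $Z_n$ of a Hirzebruch model, and inducts on the Picard rank. The extension mechanism is Lemma \ref{lemma:RT-blow-up} (Ross--Thomas): a destabilising slope test configuration centred at $Z$ survives blowing up a point \emph{off} $Z$, with polarization $g^*L-\varepsilon G$ for small $\varepsilon$, by continuity of $\mathrm{DF}$ in $\varepsilon$. The case your proposal cannot reach --- a blown-up point lying \emph{on} the proper transform of $Z_n$ --- is handled by an elementary transformation: contracting the proper transform of the fibre through that point replaces $\mathbb F_n$ by $\mathbb F_{n+1}$ and moves the point off the new negative section $Z_{n+1}$ (whose self-intersection $-n-1$ keeps Lemma \ref{lemma:Hirzebruch} applicable, as it destabilises all $\mathbb F_m$, $m\geqslant 1$, simultaneously). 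If you want to salvage your outline, you must replace your case (ii) with an argument of this kind; the exceptional curve of the last blow-down is simply the wrong destabiliser in general.
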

%Using Theorem \ref{theorem:obstruction-cscK} and Lemma \ref{lemma:k-polystable-minimal}, we give also an analogous answer to Question \ref{que:main} for metrics:
After a first version of this article appeared online \cite{JMG-obstructions-cscK-rational-surfaces-v1}, Chen and Cheng introduced the term \emph{Calabi dream manifolds} to describe those compact manifolds which admit an extremal metric for each K\"ahler class \cite{Chen-Cheng-calabi-dream-manifolds}. Notice that every cscK metric is extremal. Hence the following comes natural:
\begin{definition}
	A K\"ahler manifold is a \emph{strong Calabi dream manifold} if it admits a cscK metric in each K\"ahler class.
\end{definition}
Naturally all strong Calabi dream manifolds are Calabi dream manifolds. Chen and Cheng give some examples of Calabi dream manifolds, which are all in fact strong Calabi dream manifolds \cite{Chen-Cheng-calabi-dream-manifolds}, including K3 surfaces and surfaces of general type with no self-intersection curves. On the other hand, there are examples of surfaces of general type with curves of negative self-intersection which are not strong Calabi dream manifolds \cite{Ross-inventiones}. The name \emph{Calabi dream} refers to the initial belief of Calabi that an extremal metric may exist in each K\"ahler class of all manifolds (see \cite[\S1]{Chen-Cheng-calabi-dream-manifolds} for a short historical account of Calabi's programme).  Theorem \ref{theorem:main} suggests that we can expect few manifolds to be Calabi dream manifolds. Indeed, characterising when such metrics exist (and determining obstructions to existence) is an instigator in the study of extremal and cscK metrics. Notice that since the K\"ahler cone of $\mathbb P^n$ is generated by $\mathrm{Pic}(\mathbb P^n)$, Theorem \ref{theorem:main} provides a complete classification of strong Calabi dream rational projective surfaces, answering a strong analogue (for rational surfaces) of a question in \cite[\S2.4]{Chen-Cheng-calabi-dream-manifolds} for surfaces of general type:
\begin{corollary}
\label{corollary:calabi-dream}
The only projective rational strong Calabi dream surfaces are $\mathbb P^2$ and $\mathbb P^1\times\mathbb P^1$.
\end{corollary}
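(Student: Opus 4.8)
The plan is to read the corollary off Theorem \ref{theorem:main} together with the known fact that the existence of a cscK metric forces K-polystability. For the forward implication, suppose $S$ is a rational strong Calabi dream surface. By definition $S$ admits a cscK metric in every K\"ahler class, and in particular in $c_1(L)$ for every ample $\mathbb{Q}$-line bundle $L$. By the theorem of Berman, Darvas and Lu \cite{Berman-Darvas-Lu-csck-implies-K-stability}, the existence of a cscK metric in $c_1(L)$ implies that $(S,L)$ is K-polystable, so $L\in\StabK(S)$. As $L$ ranges over all rational ample classes this yields $\Amp^{\mathbb Q}(S)\subseteq\StabK(S)$; the reverse inclusion is immediate from the definition of $\StabK(S)$, whence $\StabK(S)=\Amp^{\mathbb Q}(S)$. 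Theorem \ref{theorem:main} now forces $S=\mathbb P^2$ or $S=\mathbb P^1\times\mathbb P^1$.

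For the converse I would produce cscK metrics explicitly in every K\"ahler class of these two surfaces. On $\mathbb P^2$ the Picard rank is one, so the K\"ahler cone is the positive ray spanned by $c_1(\mathcal O(1))$; each K\"ahler class is thus a positive real multiple of the Fubini--Study class, which is represented by the Fubini--Study metric, a K\"ahler--Einstein and hence cscK metric, and any positive rescaling of a cscK metric is again cscK. On $\mathbb P^1\times\mathbb P^1$ the K\"ahler cone is $\{aH_1+bH_2 : a,b>0\}$, with $H_1,H_2$ the pullbacks of the point classes from the two factors; for such a class I would take the product of the two round metrics scaled by $a$ and $b$. Since the scalar curvature of a Riemannian product is the sum of the scalar curvatures of the factors, and each round metric has constant scalar curvature, the resulting product metric is cscK and lies in $aH_1+bH_2$. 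Hence both surfaces admit a cscK metric in every K\"ahler class, i.e.\ are strong Calabi dream surfaces.

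The one point demanding care, which I regard as the only genuine obstacle, is the mismatch between the rational ample classes recorded by $\StabK$ and the arbitrary real K\"ahler classes in the definition of a strong Calabi dream manifold. The forward direction is untouched by this, since rational ample classes form a subset of all K\"ahler classes. The converse is precisely where the gap could bite, but it is closed at no cost because the metrics constructed above exist in \emph{every} K\"ahler class, rational or irrational; in particular no openness or density statement for K-stability (as in \cite{lebrun-simanca-opennes-k-stability}) is needed.
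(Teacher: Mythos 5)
Your proof is correct and follows essentially the same route as the paper: the forward direction combines Theorem \ref{theorem:main} with the cscK $\Rightarrow$ K-polystability results of \cite{Berman-Darvas-Lu-csck-implies-K-stability, Darvas-Rubinstein-csck-implies-K-stability}, and the converse is exactly the content of Lemma \ref{lemma:k-polystable-minimal}, whose proof constructs the same scaled Fubini--Study and product metrics you describe. Your explicit treatment of the rational-versus-real K\"ahler class issue is the same point the paper handles by observing that the K\"ahler cones of $\mathbb P^2$ and $\mathbb P^1\times\mathbb P^1$ are spanned by Picard classes, so the metrics exist in every real class.
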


We introduce the natural counterpart of (strong) Calabi dream manifolds:
\begin{definition}
A compact manifold is a \emph{totally unstable manifold} (a \emph{strongly totally unstable manifold}, respectively) if it does not admit a cscK metric (an extremal metric, respectively) in any of its K\"ahler classes.
\end{definition}

Our algebraic methods can only give a partial answer to the following question:
\begin{question}
\label{que:empty-AmpK}
Which projective rational surfaces are totally unstable?
\end{question}

\begin{theorem}
\label{theorem:low-rank}
Let $\mathbb F_n$ be the $n$-th Hirzebruch surface and let $S\rightarrow \mathbb F_n$ be the blow-up at any point. Then $S$ does not admit a cscK metric in any K\"ahler class. Moreover $\mathbb F_n$ does not admit a cscK metric if $n>0$. In particular, if $S$ is a rational surface with $\rk(\Pic(S))\leqslant 3$ and $S\neq \mathbb P^2$, $\mathbb P^1\times \mathbb P^1$, then $S$ is a totally unstable manifold. Moreover, $\StabK(S)=\emptyset$. 
\end{theorem}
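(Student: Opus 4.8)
\emph{The plan is to} deduce everything from the Ross--Thomas slope obstruction together with the fact, recalled above, that a cscK metric forces K-polystability \cite{Berman-Darvas-Lu-csck-implies-K-stability}. Concretely, it suffices to exhibit a destabilising slope test configuration for every rational ample class $L$ on $S$; this makes $(S,L)$ K-unstable, hence $L\notin\StabK(S)$, so $\StabK(S)=\emptyset$. Moreover the underlying slope inequality is an algebraic condition depending continuously on $L$, and the Ross--Thomas slope obstruction is valid for arbitrary K\"ahler classes, so the same inequality obstructs a cscK metric in every real K\"ahler class, giving total instability. For the final \emph{in particular} assertion I would invoke the classification of rational surfaces: a smooth rational surface of Picard rank $\leqslant 3$ is either $\mathbb P^2$, a Hirzebruch surface $\mathbb F_n$, or the blow-up of some $\mathbb F_n$ at one point, so the statement reduces to the first two sentences of the theorem.

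The computational engine is as follows. For a smooth rational curve $C$ with $C^2=d$ on a surface with real ample class $L$, write $\alpha=L^2$, $\beta=L\cdot C$ and $\kappa=-K_S\cdot L$, and consider the deformation to the normal cone of $C$ with exceptional parameter $c\in(0,\epsilon(C,L))$, where $\epsilon(C,L)=\sup\{x>0:L-xC\text{ is nef}\}$ is the Seshadri constant. A direct computation of its Donaldson--Futaki invariant shows that it has the sign of
\[
\psi(c)=\beta+\Big(1-\frac{\kappa\beta}{\alpha}\Big)c+\frac{\kappa d}{3\alpha}c^{2},
\]
so $C$ destabilises $(S,L)$ as soon as $\psi(c)<0$ for some admissible $c$. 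Since $\psi(0)=\beta>0$ and, whenever $-K_S\cdot L>0$ and $d<0$, the leading coefficient $\kappa d/(3\alpha)$ is negative, $\psi$ has a unique positive root $c_\ast$ with $\psi<0$ precisely for $c>c_\ast$. Thus $C$ destabilises every such $L$ with $\epsilon(C,L)>c_\ast$, equivalently $\psi(\epsilon(C,L))<0$.

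For $\mathbb F_n$ with $n\geqslant1$ I would take $C=C_0$ the negative section ($C_0^2=-n$) and write $L=aC_0+bF$, so the ample cone is $\{a>0,\ b>na\}$. Here $-K_S\cdot L=2a+2b-na>0$, the nef cone is generated by $F$ and $C_0+nF$, whence $\epsilon(C_0,L)=a$ with $L-aC_0=bF$ on the boundary. Carrying out the integrals gives the clean value
\[
\psi(a)=\frac{na\big((n+1)a-2b\big)}{3(2b-na)},
\]
which is strictly negative throughout the ample cone, because $b>na$ forces $2b>2na\geqslant(n+1)a$ for every $n\geqslant1$. Hence $C_0$ destabilises $(\mathbb F_n,L)$ for \emph{every} polarisation, proving the second sentence of the theorem; note that the same formula gives $\psi(a)=0$ when $n=0$, consistent with $\mathbb F_0=\mathbb P^1\times\mathbb P^1$ being a strong Calabi dream surface.

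For $S\to\mathbb F_n$ the blow-up at a point $p$ (the rank-$3$ case, including $n=0$) the same strategy applies, but now it requires a genuine case analysis, which I expect to be the main obstacle. Up to the action of $\mathrm{Aut}(\mathbb F_n)$ there are two positions for $p$ (on or off $C_0$), producing the relevant negative curves: the exceptional curve $E$ and the strict transform $\widetilde F$ of the fibre through $p$ (both $(-1)$-curves), and the strict transform $\widetilde{C_0}$ of the negative section, of self-intersection $-n$ or $-(n+1)$. One first checks that $-K_S\cdot L>0$ for all ample $L$ (true on these particular surfaces, although $-K_S$ need not be nef). Since $S$ has Picard rank $3$, its nef cone is rational polyhedral with finitely many extremal rays, so for any chosen $C$ with $C^2<0$ the Seshadri constant $\epsilon(C,L)=\min_{\Gamma}(L\cdot\Gamma)/(C\cdot\Gamma)$ is an explicit piecewise-linear function of $L$, the minimum running over those extremal $\Gamma$ with $C\cdot\Gamma>0$. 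The remaining work is to partition the two-dimensional projectivised ample cone into finitely many chambers and, in each, select among $E$, $\widetilde F$, $\widetilde{C_0}$ the curve whose destabilising root $c_\ast$ lies below $\epsilon(C,L)$, i.e. for which $\psi(\epsilon(C,L))<0$. The hard part is precisely this uniform, chamber-by-chamber verification over the entire ample cone --- ensuring that in every direction at least one negative curve has large enough Seshadri constant to destabilise --- and it is here that rank-$3$ polyhedrality is essential, reducing an a priori open-ended check to a finite one.
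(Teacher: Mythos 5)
Your treatment of the Hirzebruch case is correct and is essentially the paper's own Lemma \ref{lemma:Hirzebruch}: your $\psi(c)$ is exactly $\mathrm{DF}(\mathcal X,\mathcal L_c)/(2c)$ from Lemma \ref{lemma:DF-slope}, and your evaluation $\psi(a)=\frac{na((n+1)a-2b)}{3(2b-na)}<0$ agrees with the paper's computation. The problem is the rank-$3$ case, which is the heart of the theorem and which you do not actually prove: you reduce it to a ``chamber-by-chamber verification over the entire ample cone'' using $E$, $\widetilde F$ and $\widetilde{C_0}$, and then explicitly defer this as ``the hard part.'' That is a genuine gap, not a routine omission. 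Nothing you have written guarantees that every polarisation on $\mathrm{Bl}_p\mathbb F_n$ is \emph{slope} unstable with respect to one of those three curves; slope instability is a strictly stronger assertion than K-instability, and the paper itself never proves (or needs) it in rank $3$. Instead, the paper switches tools entirely: using Lemma \ref{lemma:automorphism-blowup} it identifies $\mathrm{Aut}^0(\mathrm{Bl}_{p_n}\mathbb F_n)$ with the stabiliser $\mathrm{Aut}^0(\mathbb F_n,p_n)$, computes from the explicit action \eqref{eq:action-coordinates} that this group is $(\mathbb G_a)^{n+1}\rtimes((\mathbb G_a\rtimes\mathbb G_m^2)/\mu_n)$, shows it is non-reductive via Lemma \ref{lemma:reductive-trick}, and concludes by Matsushima--Lichn\'erowicz (Theorem \ref{theorem:matsushima-obstruction}) that there is no cscK metric in \emph{any} K\"ahler class. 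The statement $\StabK(S)=\emptyset$ is then obtained by noting that rational surfaces of Picard rank $\leqslant 3$ are toric and invoking Donaldson's toric solution of the YTD conjecture for surfaces to convert non-existence of cscK metrics into K-instability of every polarisation.

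A second, related gap is your claim that ``the Ross--Thomas slope obstruction is valid for arbitrary K\"ahler classes,'' so that negativity for rational $L$ obstructs cscK in every real class. Ross--Thomas and the cited result of Berman--Darvas--Lu concern polarisations, i.e.\ rational classes; passing to transcendental K\"ahler classes requires the K-stability theory for K\"ahler manifolds (Dervan--Ross, Sj\"ostr\"om Dyrefelt), which you neither cite nor prove, and a naive density/continuity argument does not work because a cscK class could a priori be irrational while all nearby rational classes are unstable (continuity only yields $\psi\leqslant 0$ in the limit, and, more fundamentally, instability of nearby rational classes does not obstruct a metric in the irrational class). The paper avoids this issue altogether precisely because the Matsushima--Lichn\'erowicz obstruction is independent of the K\"ahler class. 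So: your plan is plausible and, if the chamber analysis were carried out and the transcendental issue addressed, would yield a stronger statement (slope instability of every polarisation in rank $3$) than the paper's; but as written both the central computation and the reduction to all K\"ahler classes are missing.
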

The case of Hirzebruch surfaces seems to have already been known to experts (see \cite[Remark 1]{Fourauthors-Extremal-projective-bundles}) via the Matsushima-Lichn\'erowicz obstruction. We give a new proof using the Ross-Thomas notion of destabilising slope test-configurations \cite{RossThomas1}. On the other hand, the proof when $\rk(\Pic(S))=3$ relies on the Matsushima-Lichn\'erowicz obstruction. In Section \S\ref{section:preliminaries} we recall these constructions and in Section \S\ref{section:proofs} we apply them to prove our results after giving a summary of the geometry of Hirzebruch surfaces.

While Theorem \ref{theorem:low-rank} is not likely to give a complete answer to Question \ref{que:empty-AmpK} for rational surfaces, it is the more general statement we can hope to prove in relation to the rank of the Picard group. Indeed, every del Pezzo surface $S$ with $\mathrm{rk}(\Pic(S))\geqslant 4$ has a K\"ahler-Einstein metric. Furthermore, the blow-up of $S$ at finitely many points has a constant scalar curvature K\"ahler metric by a result of Arezzo and Pacard \cite{Arezzo-Pacard-blow-up}. Nevertheless, the Matsushima-Lich\'erowicz obstruction seems to be too coarse to detect all totally unstable manifolds. Therefore the following question arises naturally:
\begin{question}
\label{que:totally-unstable-aut0}
Is there any totally unstable manifold $X$ rational and with reductive $\mathrm{Aut}^0(X)$?
\end{question}

We expect most manifolds to be at some point in between strongly totally unstable manifolds and strong Calabi dream manifolds, but no \emph{explicit} descriptions of $\StabK(S)$ are known beyond some simple cases such as toric varieties and some partial results on smooth del Pezzo surfaces \cite{Cheltsov-JMG-stable-del-pezzos, Cheltsov-JMG-unstable-del-pezzos}.
\subsubsection*{Acknowledgements:} 
I would like to thank J. Blanc, D. Calderbank, I. Cheltsov, R. Dervan, J. Nordstr\"om, Y. Rubinstein and S. Sun for useful discussions. After the first version of this manuscript appeared on the Arxiv \cite{JMG-obstructions-cscK-rational-surfaces-v1}, and subsequently \cite{Chen-Cheng-calabi-dream-manifolds} introduced Calabi dream manifolds in their study of Conjecture \ref{conjecture:YTD}, X. X. Chen was very kind to engage with me on discussions on the classification of Calabi dream manifolds. Furthermore, Y. Hashimoto explained to me how the initial proof of Theorem \ref{theorem:main} did indeed classify strong Calabi dream rational surfaces. I thank them for those exchanges.

JMG is supported by the Simons Foundation under the Simons Collaboration on Special Holonomy in Geometry, Analysis and Physics (grant \#488631, Johannes Nordstr\"om). 

\section{Destabilising test configurations}
\label{section:preliminaries}

\begin{definition}
\label{definition:TC}
Let $(X, L)$ be a pair formed by a projective manifold and an ample line bundle. Let $\mathbb{G}_m=\mathbb C\setminus\{0\}$ be the multiplicative group. In this article a \emph{test configuration} of $(X,L)$ (with exponent $r$) is a triple $(\mathcal{X},\mathcal{L},{p})$ consisting of
\begin{itemize}
\item a normal projective variety $\mathcal{X}$ with a $\mathbb{G}_m$-action,

\item a flat $\mathbb{G}_m$-equivariant map ${p}\colon\mathcal{X}\to\mathbb{P}^1$ (where $\mathbb{G}_m$ acts naturally on $\mathbb P^1$) such that ${p}^{-1}(t)\cong X$ for every $t\in\mathbb{P}^1\setminus\{0\}$,

\item a $\mathbb{G}_m$-equivariant ${p}$-ample line bundle $\mathcal{L}\to\mathcal{X}$,
such that
$$
\mathcal{L}\Big\vert_{{p}^{-1}(t)}\cong L^{\otimes r}
$$
for every $t\in\mathbb{P}^1\setminus\{0\}$, where we identify ${p}^{-1}(t)$ with $X$.
\end{itemize}
A test configuration $(\mathcal{X},\mathcal{L},{p})$ is a \emph{product test configuration}
if $\mathcal{X}\cong X\times\mathbb{P}^1$ and $\mathcal{L}=p_1^*(L^{\otimes r})$.
A product test configuration is \emph{trivial} if $\mathbb{G}_m$ acts trivially on the left factor of $X\times\mathbb{P}^1$.
\end{definition}

The original definition of test configuration is somewhat different: the fibration $p$ is customarily defined over $\mathbb A^1$ instead of over $\mathbb P^1$. However, Li and Xu showed that the original definition of test configuration compactifies into the one in Definition \ref{definition:TC} (see \cite{Li-Xu-K-stability} or \cite[Section 2]{Cheltsov-JMG-unstable-del-pezzos} for a succinct description of this equivalence). We will use the intersection formula for the generalised Futaki invariant (sometimes known in the literature as Donaldson-Futaki invariant) appearing in \cite[Proposition~6]{Li-Xu-K-stability} and \cite{odaka-test-configurations-restriction,wang-height-git-weight} as the definition. We recall that the \emph{slope} of the pair $(X,L)$ is
$$
\nu(L)=\frac{-K_X\cdot L^{n-1}}{{L}^{n}}.
$$
The \emph{generalised Futaki invariant} of the test configuration $(\mathcal{X},\mathcal{L},{p})$ with exponent $r$
is the number
\begin{equation}
\label{equation:DF-definition}
\DF\big(\mathcal{X},\mathcal{L},{p}\big)=\frac{1}{r^{n}}\Bigg(\frac{n}{n+1}\frac{1}{r}\nu(L)\mathcal{L}^{n+1}+\mathcal{L}^n\cdot\Big(K_{\mathcal{X}}-{p}^*\big(K_{\mathbb{P}^1}\big)\Big)\Bigg),
\end{equation}
where $n$ is the dimension of the variety $X$. If the test configuration $(\mathcal{X},\mathcal{L},{p})$ is trivial then \eqref{equation:DF-definition} gives $\mathrm{DF}(\mathcal{X},\mathcal{L},{p})=0$.

\begin{definition}
\label{definition:K-stability}
The pair $(X,L)$ is \emph{K-polystable} if $\mathrm{DF}(\mathcal{X},\mathcal{L},{p})\geqslant 0$
for every non-trivial test configuration $(\mathcal{X},\mathcal{L},{p})$,
and $\mathrm{DF}(\mathcal{X},\mathcal{L},{p})=0$ only if $(\mathcal{X},\mathcal{L},{p})$ is a product test configuration.
The pair $(X,L)$ is \emph{K-stable} if $\mathrm{DF}\big(\mathcal{X},\mathcal{L},{p}\big)>0$
for every non-trivial test configuration $(\mathcal{X},\mathcal{L},{p})$.
If $\mathrm{DF}(\mathcal{X},\mathcal{L},{p})\geqslant 0$ for every test configuration $(\mathcal{X},\mathcal{L},{p})$,
then $(X,L)$ is \emph{K-semistable}.
\end{definition}

If the pair $(X,L)$ is not K-semistable, then $\mathrm{DF}(\mathcal{X},\mathcal{L},{p})<0$
for some test configuration $(\mathcal{X},\mathcal{L},{p})$ of the pair $(X,L)$.
In this case, we say that $(X,L)$ is \emph{K-unstable}, and $(\mathcal{X},\mathcal{L},{p})$ is a \emph{destabilising} test configuration.

The pair $(X,L)$ is K-polystable (respectively, K-stable or K-semistable) if and only if
the pair $(X,L^{\otimes k})$ is K-polystable (respectively, K-stable or K-semistable) for some positive integer $k$. Thus, we can adapt both Definitions~\ref{definition:TC} and \ref{definition:K-stability} to the case when $L$ is an ample $\mathbb{Q}$-divisor class on the variety $X$ and assume that $r=1$ in the formula \eqref{equation:DF-definition} for the generalised Futaki invariant. This gives us notions of K-polystability, K-stability, K-semistability and K-unstability for varieties polarised by ample $\mathbb{Q}$-divisor classes. In the following we will use $\mathbb Q$-divisor classes and $\mathbb Q$-line bundles interchangeably.

\subsection{Automorphism groups and the Matsushima-Lichn\'erowicz obstruction}
Let us recall the following well-known obstruction to the existence of cscK metrics:
\begin{theorem}[{Matsushima-Lichn\'erowicz's obstruction \cite{lichnerowicz-matsushima-obstruction}, c.f. \cite[\S4.2]{rubinstein-survey}}]
\label{theorem:matsushima-obstruction}
If $X$ is a smooth complex projective variety admitting a cscK metric, then the connected identity component of the automorphism group $\mathrm{Aut}^0(X)$ is reductive.	
\end{theorem}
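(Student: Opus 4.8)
The plan is to work entirely at the level of Lie algebras. By the theorem of Bochner--Montgomery, $\mathrm{Aut}^0(X)$ is a complex Lie group whose Lie algebra is the space $\mathfrak h$ of holomorphic vector fields on $X$, so it suffices to show that $\mathfrak h$ is a \emph{reductive} complex Lie algebra, i.e.\ that it admits a compact real form up to an abelian summand. The natural candidate for such a real form is the Lie algebra of the isometry group of the cscK metric $g$, and the whole argument consists of producing enough Killing fields to complexify onto all of $\mathfrak h$.

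First I would set up the holomorphy potentials relative to the fixed cscK form $\omega$. For a smooth complex-valued function $f$ let $\mathrm{grad}^{1,0}f$ denote the $(1,0)$-part of its gradient and define the second-order operator $\mathcal D f=\bar\partial(\mathrm{grad}^{1,0}f)$. A short computation shows that $\mathrm{grad}^{1,0}f$ is holomorphic precisely when $\mathcal D f=0$, equivalently $f\in\ker(\mathcal D^{*}\mathcal D)$, where $\mathcal D^{*}\mathcal D$ is the fourth-order self-adjoint elliptic Lichn\'erowicz operator. Using Hodge-theoretic arguments (holomorphic fields vanishing somewhere are exactly those of gradient type, modulo the parallel fields coming from $\mathrm{Alb}(X)$) one obtains the splitting
$$\mathfrak h=\mathfrak a\oplus\mathfrak h_0,$$
where $\mathfrak a$ is the abelian algebra of parallel holomorphic fields and $\mathfrak h_0=\{\mathrm{grad}^{1,0}f:\mathcal D f=0\}$ is the ideal of fields admitting zeros.

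The crucial step, and the one I expect to be the main obstacle, is the Bochner--Lichn\'erowicz identity expressing $\mathcal D^{*}\mathcal D$ (up to normalisation) in the schematic form
$$\mathcal D^{*}\mathcal D f=\tfrac14\Delta^2 f+\langle\mathrm{Ric},i\partial\bar\partial f\rangle+\tfrac12\langle\mathrm{grad}\,s,\mathrm{grad}\,f\rangle,$$
where $s$ is the scalar curvature. It is exactly the vanishing of the last, first-order term, which holds if and only if $s$ is constant, that makes $\mathcal D^{*}\mathcal D$ commute with complex conjugation. Hence, under the cscK hypothesis, $\ker(\mathcal D^{*}\mathcal D)$ is stable under $f\mapsto\bar f$, so it is the complexification of its real part. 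Writing $f=f_1+if_2$ with $f_1,f_2$ real solutions, each real solution is a Hamiltonian whose associated field $J\,\mathrm{grad}\,f_j$ is Killing; this identifies $\mathfrak h_0=\mathfrak k_0\oplus J\mathfrak k_0$ as the complexification of the compact real Lie algebra $\mathfrak k_0$ of Hamiltonian Killing fields.

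Finally I would assemble the pieces. Since $\mathfrak h_0$ is the complexification of a compact real form it is reductive, and $\mathfrak a$ is an abelian ideal, so $\mathfrak h=\mathfrak a\oplus\mathfrak h_0$ is reductive and therefore $\mathrm{Aut}^0(X)$ is reductive. The delicate points to verify carefully are the derivation of the Lichn\'erowicz formula above and the claim that \emph{real} elements of $\ker(\mathcal D^{*}\mathcal D)$ generate Killing rather than merely holomorphic fields; both hinge on the constancy of $s$, which is precisely the geometric input distinguishing cscK from general K\"ahler metrics and without which the conjugation symmetry, and hence reductivity, can fail.
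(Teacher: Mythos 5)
The paper itself contains no proof of this statement: Theorem~\ref{theorem:matsushima-obstruction} is quoted as a classical result, with the proof deferred to \cite{lichnerowicz-matsushima-obstruction} and \cite[\S4.2]{rubinstein-survey}. Your sketch follows exactly the standard route taken in those references: holomorphy potentials, the fourth-order Lichn\'erowicz operator $\mathcal D^*\mathcal D$, the identity whose first-order term $\langle\mathrm{grad}\,s,\mathrm{grad}\,f\rangle$ vanishes precisely when $s$ is constant, and the resulting conjugation-invariance of $\ker(\mathcal D^*\mathcal D)$, which identifies $\mathfrak h_0=\mathfrak k_0\oplus J\mathfrak k_0$ as the complexification of the algebra of Hamiltonian Killing fields. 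That is the correct mechanism and it is the heart of the theorem.

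There is, however, one genuine gap: your opening reduction (``it suffices to show that $\mathfrak h$ is a reductive complex Lie algebra'') and your closing inference (``$\mathfrak h$ is reductive and therefore $\mathrm{Aut}^0(X)$ is reductive'') are not valid as stated. Reductivity of a connected complex algebraic group is \emph{not} a property of its Lie algebra alone: $\mathbb G_a$ and $\mathbb G_m$ have isomorphic one-dimensional abelian (hence reductive) Lie algebras, yet $\mathbb G_m$ is reductive and $\mathbb G_a$ is not --- and unipotent pieces of exactly this kind are the ones this paper must detect (Lemmas~\ref{lemma:reductive-trick} and~\ref{lemma:hirzebruch-obstruction}). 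So concluding group reductivity from Lie-algebra reductivity is a step that fails in general. The fix uses data you have already produced, but must be stated at the group level: the Hamiltonian Killing fields, together with the real and imaginary parts of the parallel fields (which are Killing), form the Lie algebra of the group $K$ of holomorphic isometries of $(X,g)$; $K$ is a compact subgroup of $\mathrm{Aut}^0(X)$, being closed in the compact group $\mathrm{Isom}(X,g)$, and your decomposition says precisely that $\mathfrak h=\mathrm{Lie}(K)\oplus J\,\mathrm{Lie}(K)$. Hence $\mathrm{Aut}^0(X)$ is the complexification of the compact group $K$, and complexifications of compact groups are reductive; this is how the cited references conclude. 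A secondary, smaller point: the splitting $\mathfrak h=\mathfrak a\oplus\mathfrak h_0$ with $\mathfrak a$ consisting of \emph{parallel} fields is not a metric-free Hodge-theoretic fact; for an arbitrary K\"ahler metric Hodge theory only yields a harmonic part plus a gradient part, and promoting the harmonic part to a parallel one is again part of Lichn\'erowicz's theorem and uses the cscK hypothesis, so it should not be quoted as an input available before constancy of $s$ enters.
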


\begin{remark}
\label{remark:reductive-tricks}
The classification and study of (linearly) reductive linear algebraic groups can become rather technical. Fortunately, for our purposes it will suffice us with the following well known rules (see \cite[Chapter 4]{mukai-book-moduli} for more details):
\begin{enumerate}[(i)]
	\item If $G$ is defined over $\mathbb C$, $G$ is linearly reductive if and only if the connected identity component $G^0$ is reductive.
	\item If $H\triangleleft G$ is normal and $G$ is linearly reductive, then $G/H$ is linearly reductive.
\end{enumerate}
\end{remark}

\begin{lemma}
\label{lemma:reductive-trick}
Let $H$ be a reductive connected complex algebraic linear group. Let $H'\triangleleft H$ be a finite normal subgroup of $H$. The group $G=(\mathbb G_a)^{n+1}\rtimes (H/H')$ is not reductive for $n\geqslant 0$. 
\end{lemma}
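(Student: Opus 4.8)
The plan is to produce inside $G$ a nontrivial connected normal unipotent subgroup; by the structure theory of linear algebraic groups a connected group over $\mathbb C$ is reductive exactly when its unipotent radical, i.e. its largest connected normal unipotent subgroup, is trivial, so exhibiting such a subgroup settles the claim. Combined with Remark \ref{remark:reductive-tricks}(i), which reduces the reductivity of $G$ to that of $G^0$, this is the criterion I would verify.

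First I would record that $G$ is connected, so that $G=G^0$ and no passage to the identity component is needed. Indeed $(\mathbb G_a)^{n+1}$ is connected, and $H/H'$ is connected because it is the image of the connected group $H$ under the quotient by the finite subgroup $H'$; a semidirect product of connected groups is connected. The natural projection $G\twoheadrightarrow H/H'$ then has kernel $U:=(\mathbb G_a)^{n+1}\times\{1\}$, which is therefore normal in $G$. This $U$ is connected and unipotent, being isomorphic to $\mathbb G_a^{\,n+1}$, and it is nontrivial precisely because $n+1\geqslant 1$ for all $n\geqslant 0$. Hence the unipotent radical of $G$ contains $U$ and is nonzero, so $G$ is not reductive.

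I expect no genuine obstacle: the entire content is the observation that the additive factor sits in $G$ as a nonzero connected normal unipotent subgroup, so that the unipotent radical cannot vanish. The only points deserving a word are the connectedness of $H/H'$ (so that the identity-component reduction of Remark \ref{remark:reductive-tricks}(i) is harmless) and the remark that the reductivity of $H$ is never used---it is retained only because that is the form in which the lemma will be applied. If one prefers to avoid the unipotent radical, an equally short variant uses that a connected normal subgroup of a reductive group is again reductive: were $G$ reductive, the connected normal subgroup $U\cong\mathbb G_a^{\,n+1}$ would have to be reductive, contradicting that a nontrivial unipotent group is never reductive.
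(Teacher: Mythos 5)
Your proof is correct, but it takes a genuinely different route from the paper's. The paper argues by quotients: it notes that $G$ is connected, uses Remark \ref{remark:reductive-tricks} to see that $H/H'$ is linearly reductive, then asserts that $H/H'$ is normal in $G$ and concludes that if $G$ were linearly reductive the quotient $(\mathbb G_a)^{n+1}$ would be too, contradicting the non-reductivity of $\mathbb G_a$ (cited from Mukai, plus induction on $n$). You instead argue by subgroups: $(\mathbb G_a)^{n+1}\times\{1\}$ is a nontrivial connected normal unipotent subgroup of $G$, so the unipotent radical of $G$ is nontrivial and $G$ fails to be reductive by definition. Your route is not only shorter but more robust, because the paper's assertion that $H/H'$ is normal in $G$ ``by assumption'' is delicate: in a semidirect product $N\rtimes Q$ the complement $Q$ is normal only when the action of $Q$ on $N$ is trivial, i.e.\ when the product is direct, and in the intended applications (such as $\mathrm{Aut}^0(\mathbb F_n)$, where $\mathrm{GL}(2,\mathbb C)/\mu_n$ acts nontrivially on the space of binary forms $(\mathbb G_a)^{n+1}$) this is not the case; it is the additive factor, not $H/H'$, that is normal. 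Your closing variant --- if $G$ were reductive, its connected normal subgroup $(\mathbb G_a)^{n+1}$ would be reductive, which it is not --- is precisely the correct way to exploit normality here. You are also right that the reductivity of $H$ is never used: the statement holds for $(\mathbb G_a)^{n+1}\rtimes Q$ with $Q$ an arbitrary linear algebraic group, so your argument in fact proves a stronger statement than the lemma as phrased.
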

\begin{proof}
Notice that $G$ and $H$ are connected and hence $G=G^0$, $H=H^0$ and $G$ is linearly reductive if and only if $G$ is reductive. The group $H/H'$ is linearly reductive by Remark \ref{remark:reductive-tricks} (i) and (ii). The subgroup $H/H'$ is normal in $G$ by assumption. If $G$ was linearly reductive, then $G/H=(\mathbb G_a)^{n+1}$ would be linearly reductive (and hence reductive) but this is well known to be false (e.g. see \cite[Example 4.42]{mukai-book-moduli} for a proof for $\mathbb G_a$ and then apply induction on $n$ using Remark \ref{remark:reductive-tricks}).
\end{proof}

The following well-known result will come useful when applying Theorem \ref{theorem:matsushima-obstruction}.
\begin{lemma}
\label{lemma:automorphism-blowup}
Let $S$ be a compact complex projective surface and $\pi: S'\rightarrow S$ be the blow up of $S$ at a point $p$. All the automorphisms in $\mathrm{Aut}^0 (S')$ leave the exceptional divisor $E =\pi^{-1}(p)$ invariant. Moreover, via restriction to $S'\setminus E$, we have
$$\mathrm{Aut}^0 (S')\cong\Big\{\phi\in \mathrm{Aut}^0 (S) \ \Big| \ \phi(p) = p\Big\} =: \mathrm{Aut}^0 (S,p).$$
\end{lemma}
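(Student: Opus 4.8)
The plan is to prove the statement in two stages. First I would show that every automorphism in $\mathrm{Aut}^0(S')$ fixes $E$, using the action on the N\'eron--Severi group. Since $\mathrm{Aut}^0(S')$ is connected, the natural representation $\mathrm{Aut}(S')\to\mathrm{GL}(\mathrm{NS}(S'))$ on the discrete, finitely generated group $\mathrm{NS}(S')$ must be trivial on the identity component; hence every $\psi\in\mathrm{Aut}^0(S')$ fixes the numerical class $[E]$. Because $E$ is irreducible with $E^2=-1<0$, it is the unique effective divisor in its numerical class: if $D\geqslant 0$ with $D\equiv E$, then $D\cdot E=E^2=-1<0$ forces $E\subseteq\mathrm{Supp}(D)$, and writing $D=mE+R$ with $R\geqslant 0$ not containing $E$ and intersecting against an ample class gives $m=1$ and $R=0$. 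As $\psi(E)$ is an irreducible curve numerically equivalent to $E$, it must equal $E$, which proves the first assertion.

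The second stage is the isomorphism, which I would set up via the universal properties of blow-up and blow-down. Given $\psi\in\mathrm{Aut}(S')$ with $\psi(E)=E$, the composite $\pi\circ\psi$ contracts exactly $E$ to the point $p$, so it factors as $\phi\circ\pi$ for a unique $\phi\in\mathrm{Aut}(S)$, and $\phi(p)=\pi(\psi(E))=p$. Conversely, if $\phi\in\mathrm{Aut}(S)$ fixes $p$, then $(\phi\circ\pi)^{-1}\mathfrak{m}_p\cdot\mathcal{O}_{S'}=\pi^{-1}\mathfrak{m}_p\cdot\mathcal{O}_{S'}=\mathcal{O}_{S'}(-E)$ is invertible, so the universal property of the blow-up produces a unique lift $\psi\in\mathrm{Aut}(S')$ with $\pi\circ\psi=\phi\circ\pi$. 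These two assignments are mutually inverse, are homomorphisms of algebraic groups, and restrict to the identification $S'\setminus E\cong S\setminus\{p\}$; hence they identify the subgroup $\{\psi\in\mathrm{Aut}(S')\mid\psi(E)=E\}$ with the stabiliser $\{\phi\in\mathrm{Aut}(S)\mid\phi(p)=p\}$ as algebraic groups.

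It remains to match up identity components. By the first stage $\mathrm{Aut}^0(S')\subseteq\{\psi\mid\psi(E)=E\}$, and being the maximal connected subgroup through the identity it coincides with the identity component of that stabiliser; transporting along the isomorphism of the second stage identifies it with the identity component of the stabiliser of $p$, which sits inside $\mathrm{Aut}^0(S)$ and fixes $p$. I expect the main obstacle to be precisely this last bookkeeping: the restriction homomorphism automatically lands in $\mathrm{Aut}^0(S)$ as the image of a connected group, but to obtain the identity $\mathrm{Aut}^0(S')\cong\{\phi\in\mathrm{Aut}^0(S)\mid\phi(p)=p\}$ as written one must check that the stabiliser of $p$ inside the connected group $\mathrm{Aut}^0(S)$ is itself connected, so that it agrees with the identity component of the full stabiliser produced by the blow-down isomorphism. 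The invariance of $E$ and the factorisations through blow-up and blow-down are otherwise routine once the numerical uniqueness of $E$ is established.
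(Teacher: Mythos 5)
Your proposal is correct and follows the same two-step skeleton as the paper's proof: invariance of $E$ under $\mathrm{Aut}^0(S')$, then transport across $\pi$ via the universal property of the blow-up (the paper cites \cite[Corollary 7.15]{HartshorneAG} for exactly the lift you construct, and obtains the forward direction by descending $\sigma$ to $\pi_*\circ\sigma$ just as you do). The one substantive difference is that the paper merely \emph{asserts} that elements of $\mathrm{Aut}^0(S')$ must preserve $E$, whereas you prove it: triviality of the $\mathrm{Aut}^0$-action on the discrete group $\mathrm{NS}(S')$, combined with the observation that $E$, being irreducible with $E^2=-1$, is the unique effective divisor in its numerical class, so the irreducible curve $\psi(E)\equiv E$ must equal $E$; this is a correct and worthwhile fleshing-out of a step the paper leaves implicit. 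Your closing caveat --- that identifying $\mathrm{Aut}^0(S')$ with the full stabiliser $\{\phi\in\mathrm{Aut}^0(S)\mid\phi(p)=p\}$, rather than with the identity component of the stabiliser of $p$ in $\mathrm{Aut}(S)$, requires knowing that this stabiliser inside the connected group $\mathrm{Aut}^0(S)$ is itself connected --- does flag a genuine subtlety that you leave unverified, but the paper's proof passes over exactly the same point in silence (it asserts without comment that the descended automorphism lies in $\mathrm{Aut}^0(S,p)$ and that every $\phi\in\mathrm{Aut}^0(S,p)$ lifts into $\mathrm{Aut}^0(S')$), so on this bookkeeping your write-up is, if anything, more careful than the original rather than deficient relative to it.
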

\begin{proof}
%$\mathrm{Aut}_0(S') fixes $\mathbb{NS}(S')$ by \cite[Lemma 2.8]{Brion-automorphism-groups-proj-varieties} and therefore it must leave $E$ invariant, since it is the only effective cycle in its class in $\mathbb{NS}(S')$.
An automorphism $\sigma\in \mathrm{Aut}^0(S')$ must leave the exceptional divisor $E$ invariant and hence, it induces an automorphism $\pi_*\circ\sigma\in \mathrm{Aut}^0(S,p)$ fixing $p$. Conversely, let $\phi\colon S\rightarrow S$ be an automorphism fixing $p$. The inverse image in $S'$ via $\pi$ of both $p$ and $\phi(p)$ is a Cartier divisor. Hence, the universal property of the blow-ups \cite[Corollary 7.15]{HartshorneAG} induces a unique automorphism $\widetilde \phi\colon S'\rightarrow S'$  such that $\pi\circ \phi=\widetilde \phi\circ \pi$ and in particular $E$ is $\widetilde\phi$-invariant.
\end{proof}

\subsection{Slope stability}
\label{section:slope}
Let us recall the construction of \emph{slope test configurations}, introduced by Ross and Thomas \cite{RossThomas1} (also known in the literature as deformation to the normal cone). Our notation follows our previous work with Cheltsov \cite{Cheltsov-JMG-unstable-del-pezzos}. We have simplified the hypothesis to the needs of the problem. See \cite{Cheltsov-JMG-unstable-del-pezzos} or \cite{RossThomas1} for a more general treatment.

Let $S$ be a smooth surface, $L$ be an ample $\mathbb Q$-divisor class of $S$ and $Z$ be a smooth irreducible divisor in $S$.
\begin{definition}
The \emph{Seshadri constant} of $L$ at $Z$ is the real number
$$\Sesh(S, L, Z)=\sup\left\{\lambda \ : \ L -Z\text{ is nef}\right\}.$$
\end{definition}
Let $\pi_Z\colon \mathcal X\rightarrow S\times\mathbb P^1$ be the blow-up of $S\times \mathbb P^1$ along $Z\times \{0\}$ with exceptional divisor $E_Z$. By a slight abuse of notation we identify $Z\subset S$ with $Z\times \{0\}\in S\times\mathbb P^1$. Let $p_{\mathbb P^1}\colon S\times \mathbb P^1 \rightarrow \mathbb P^1$ and $p_{S}\colon S\times \mathbb P^1\rightarrow S$ be the natural projections. Let $p=p_{\mathbb P^1}\circ \pi_Z$ and define the $\mathbb Q$-divisor
$$\mathcal L_{\lambda}:=(p_S \circ \pi_Z)^*L-\lambda E_Z.$$
We call $(\mathcal X, \mathcal L_{\lambda})$ the \emph{slope test configuration} of $(S, L)$ centred at $Z$.

\begin{lemma}[{\cite{RossThomas1}, c.f. \cite[Lemma 2.2]{cheltsov-rubinstein-flops}, \cite[Lemma 3.1]{Cheltsov-JMG-unstable-del-pezzos}}]
\label{lemma:DF-slope}
The $\mathbb Q$-divisor class $\mathcal L_{\lambda}$ is $p$-ample for all rational $0<\lambda<\Sesh(S,L,Z)$. Moreover its generalised Futaki invariant satisfies
\begin{equation*}
\mathrm{DF}(\mathcal X, \mathcal L_{\lambda})=\frac{2}{3}\nu(L)\left[-3\lambda^2L\cdot Z + \lambda^3Z^2\right] + \lambda^2(2-2g(Z)) + 2\lambda L \cdot Z,
%\label{eq:RT-DF}
\end{equation*}
where $g(Z)$ is the genus of $Z$.
\end{lemma}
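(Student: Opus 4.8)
The plan is to treat the two assertions separately. The $p$-ampleness of $\mathcal{L}_\lambda$ for $0<\lambda<\Sesh(S,L,Z)$ is essentially the defining property of the Seshadri constant: relative ampleness over $\mathbb{P}^1$ can be checked fibrewise, the fibres over $\mathbb{P}^1\setminus\{0\}$ carry the ample class $L$, and the restriction to the central fibre is governed by the Seshadri threshold, so that $\mathcal{L}_\lambda$ stays $p$-ample precisely for $0<\lambda<\Sesh(S,L,Z)$. This is exactly the content of \cite{RossThomas1}, which I would simply invoke. The substance of the lemma is the second assertion, which I would obtain by specialising the intersection formula \eqref{equation:DF-definition} to $n=\dim S=2$ and $r=1$, so that $\DF(\mathcal{X},\mathcal{L}_\lambda)=\tfrac{2}{3}\nu(L)\mathcal{L}_\lambda^{3}+\mathcal{L}_\lambda^{2}\cdot\bigl(K_{\mathcal{X}}-p^{*}K_{\mathbb{P}^1}\bigr)$. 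This reduces the problem to computing two intersection numbers on the threefold $\mathcal{X}$.

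First I would record the geometry of the blow-up $\pi_Z\colon\mathcal{X}\to S\times\mathbb{P}^1$ along the curve $C:=Z\times\{0\}$. Since $C$ has codimension two, $K_{\mathcal{X}}=\pi_Z^{*}K_{S\times\mathbb{P}^1}+E_Z$, and as $K_{S\times\mathbb{P}^1}=p_S^{*}K_S+p_{\mathbb{P}^1}^{*}K_{\mathbb{P}^1}$ while $p=p_{\mathbb{P}^1}\circ\pi_Z$, the twisted canonical class simplifies to $K_{\mathcal{X}}-p^{*}K_{\mathbb{P}^1}=(p_S\circ\pi_Z)^{*}K_S+E_Z$. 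Next I would identify the normal bundle: because $S\times\mathbb{P}^1$ splits, $N_{C/(S\times\mathbb{P}^1)}\cong N_{Z/S}\oplus\mathcal{O}_Z$, so its degree is $Z^2$. With these in hand the only intersection rules needed on $\mathcal{X}$ are the standard ones for a blow-up along a smooth curve: triple products of classes pulled back from $S$ vanish; $\pi_Z^{*}A\cdot\pi_Z^{*}B\cdot E_Z=0$; $\pi_Z^{*}A\cdot E_Z^{2}=-(A\cdot C)$ for a divisor $A$ on $S\times\mathbb{P}^1$; and $E_Z^{3}=-\deg N_{C/(S\times\mathbb{P}^1)}=-Z^2$.

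Writing $H:=(p_S\circ\pi_Z)^{*}L$, so that $\mathcal{L}_\lambda=H-\lambda E_Z$, expanding the cube and using $H^{3}=H^{2}\cdot E_Z=0$, $H\cdot E_Z^{2}=-(L\cdot Z)$ and $E_Z^{3}=-Z^2$ gives $\mathcal{L}_\lambda^{3}=-3\lambda^{2}\,L\cdot Z+\lambda^{3}Z^{2}$, which is exactly the bracket multiplying $\tfrac{2}{3}\nu(L)$. For the second number, expanding $\mathcal{L}_\lambda^{2}\cdot\bigl((p_S\circ\pi_Z)^{*}K_S+E_Z\bigr)$ and discarding every term containing two pullbacks against $E_Z$ leaves $2\lambda\,L\cdot Z-\lambda^{2}(K_S\cdot Z+Z^{2})$. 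Finally I would invoke the adjunction formula on the surface, $Z^{2}+K_S\cdot Z=2g(Z)-2$, to rewrite this as $2\lambda\,L\cdot Z+\lambda^{2}(2-2g(Z))$. Summing the two contributions yields the claimed expression for $\DF(\mathcal{X},\mathcal{L}_\lambda)$.

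The computation itself is routine bookkeeping; the one place demanding care is the sign bookkeeping on the exceptional divisor, namely the normalisations $\pi_Z^{*}A\cdot E_Z^{2}=-(A\cdot C)$ and $E_Z^{3}=-Z^{2}$, where a convention slip would corrupt the $\lambda^{3}Z^{2}$ and $\lambda^{2}Z^{2}$ terms. The reassuring internal check is that the genus enters \emph{solely} through adjunction: only after combining $K_S\cdot Z$ and $Z^{2}$ does the topological term $\lambda^{2}(2-2g(Z))$ emerge, so matching the target formula simultaneously confirms the orientation conventions and the normal-bundle identification.
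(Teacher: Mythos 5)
Your proposal is correct, but it is worth noting that the paper itself offers no proof of this lemma at all: it is quoted directly from Ross--Thomas (c.f.\ Lemma 2.2 of Cheltsov--Rubinstein and Lemma 3.1 of Cheltsov--Martinez-Garcia), so your computation is genuinely more self-contained than the paper's treatment. Your derivation of the Futaki formula checks out in every detail: the discrepancy $K_{\mathcal{X}}=\pi_Z^*K_{S\times\mathbb{P}^1}+E_Z$ is right for a smooth centre of codimension two, the splitting $N_{C/(S\times\mathbb{P}^1)}\cong N_{Z/S}\oplus\mathcal{O}_Z$ gives $\deg N_C=Z^2$, the four intersection rules (vanishing of triple pullbacks from $S$, $\pi_Z^*A\cdot\pi_Z^*B\cdot E_Z=0$ by projection formula, $\pi_Z^*A\cdot E_Z^2=-(A\cdot C)$, and $E_Z^3=-\deg N_C=-Z^2$) carry the correct signs, and the expansion then yields $\mathcal{L}_\lambda^3=-3\lambda^2 L\cdot Z+\lambda^3Z^2$ and $\mathcal{L}_\lambda^2\cdot(K_{\mathcal{X}}-p^*K_{\mathbb{P}^1})=2\lambda L\cdot Z-\lambda^2(K_S\cdot Z+Z^2)$, with adjunction converting the last bracket into $\lambda^2(2-2g(Z))$, exactly matching the stated formula with $n=2$, $r=1$ in \eqref{equation:DF-definition}. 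The one component you do not prove from scratch is the $p$-ampleness statement, which you delegate to Ross--Thomas after the (correct) observation that relative ampleness over a base can be checked on fibres for proper morphisms; since the restriction of $\mathcal{L}_\lambda$ to the proper transform of $S\times\{0\}$ is $L-\lambda Z$, the appearance of the Seshadri constant is indeed natural, though a complete argument would also need ampleness on the ruled component $E_Z$ of the central fibre, which is where the Ross--Thomas citation is doing real work. In short: what your route buys is a verifiable, self-contained proof of the key formula that the paper merely imports; what the paper's route buys is brevity, at the cost of the reader having to trust or chase the references.
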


The advantage of the Ross-Thomas construction of test configurations is that it allows us to extend destabilising test-configurations of a pair $(S, L)$ to destabilising test configurations of pairs $(S', L')$ where $S'\rightarrow S$ is \emph{any} composition of blow-downs of $(-1)$-curves not supported on the curve $Z\subset S$ and $L'$ is \emph{some} polarisation. This is the content of \cite[Corollary~5.29]{RossThomas1}. We give a more detailed proof of the result for the convenience of the reader, since this is our main tool to prove Theorem \ref{theorem:main} and we will need a precise statement.

Suppose that $(S,L)$ is destabilised by a slope test configuration $(\mathcal X, \mathcal L_\lambda)$ centred at a smooth irreducible curve $Z\subset S$ and let $p\in S\setminus Z$ be a point. Let $g\colon S^{\prime}\to S$ be the blow-up of $p$ and $G$ be its exceptional curve.  Denote by $Z^{\prime}$ the proper transform of $Z$ in $S^\prime$. The morphism $g$ induces a blow-up $h:S'\times \mathbb P^1\rightarrow S\times \mathbb P^1$ of $p\times \mathbb P^1  \subset S\times \mathbb P^1$. There exists a commutative diagram
$$
\xymatrix{
\mathcal{X}^\prime\ar@/_1pc/@{->}[dddr]_{p^\prime} \ar@{->}[rrrr]^{f} \ar@{->}[ddr]^{\pi_{Z'}}&&&&\mathcal X\ar@{->}[ddl]_{\pi_{Z}}\ar@/^1pc/@{->}[dddl]^{p}  \\
&S^\prime \ar@{->}[rr]^{g}&&S &\\
&S^\prime\times\mathbb P^1\ar@{->}[u]_{q_{S^\prime}}\ar@{->}[d]^{q_{\mathbb{P}^1}^\prime}\ar@{->}[rr]^{h}&&S\times \mathbb P^1\ar@{->}[u]^{q_{S}}\ar@{->}[d]_{q_{\mathbb{P}^1}} & \\
&\mathbb{P}^1\ar@{=}[rr]&&\mathbb{P}^1,&}
$$
where $q_S, q_{S'}, q_{\mathbb P^1}, q'_{\mathbb P^1}$ are the obvious projections, $\pi_{Z'}\colon\mathcal X'\rightarrow S'\times \mathbb P^1 $ is the blow-up of $Z'\subset S'\times \{0\}$ and $f\colon \mathcal X'\rightarrow \mathcal X$ is the contraction of $G\times \mathbb P^1\subset \mathcal X'$. We denote the exceptional divisor of $\pi_{Z'}$ by $E_{Z'}$.

We may choose some sufficiently small positive rational number $\varepsilon>0$ so that $L^\prime_\varepsilon=g^*(L)-\varepsilon G$ is ample. By Lemma \ref{lemma:DF-slope}, we may choose a positive rational number $\lambda=\lambda(\varepsilon)$ such that $0< \lambda<\Sesh(S^\prime,L^\prime_\varepsilon,Z^\prime)$ and then $\mathcal{L}_{\lambda}^\prime=(p_{S^\prime}\circ\pi_{Z^\prime})^*(L^\prime)-\lambda E_{Z^\prime}$ is $p'$-ample where $p'=q'_{\mathbb P^1}\circ \pi_{Z'}$, $(\mathcal{X}^\prime,\mathcal{L}_{\lambda}^\prime,p^\prime)$ is a test configuration of $(S^\prime,L^\prime_\epsilon)$ (in fact its slope test configuration centred at $Z'$) and 
$$
\mathrm{DF}\big(\mathcal{X}^\prime,\mathcal{L}_{\lambda}^\prime,p^\prime\big)=\frac{2}{3}\nu(L^\prime_\varepsilon)\Big(\lambda^3Z^2-3\lambda^2L\cdot Z\Big)+\lambda^2\big(2-2g(Z)\big)+2\lambda L\cdot Z,
$$
where we use the fact that $p\not\in Z$. The latter also implies that 
$\displaystyle{\lim_{\varepsilon\to 0^+}}\Sesh\big(S^\prime,L^\prime_\varepsilon,Z^\prime\big)=\Sesh(S,L,Z)$ and
$\nu(L^\prime_\varepsilon)=\frac{-K_{S^\prime}\cdot L^\prime_\varepsilon}{L^\prime_\varepsilon\cdot L^\prime_\varepsilon}=\frac{-K_S\cdot L-\varepsilon}{L^2-\varepsilon^2}$ so that $\displaystyle{\lim_{\varepsilon\to 0^+}\nu(L^\prime_\varepsilon})=\nu(L)$. As a result $\displaystyle{\lim_{\varepsilon\to 0^+}}\mathrm{DF}\big(\mathcal{X}^\prime,\mathcal{L}_{\lambda}^\prime,p^\prime\big)=\mathrm{DF}\big(\mathcal{X},\mathcal{L}_{\lambda},p\big)$. In summary, since for fixed $\lambda$ the generalised Futaki invariant $\mathrm{DF}\big(\mathcal{X}^\prime,\mathcal{L}_{\lambda}^\prime,p^\prime\big)$ is a quotient of polynomials on $\epsilon$ (and hence it is continuous) we have proved:
\begin{lemma}[{\cite[Corollary~5.29]{RossThomas1}}]
\label{lemma:RT-blow-up}
Suppose that $0<\lambda<\Sesh(S,L,Z)$ and $\mathrm{DF}\big(\mathcal{X},\mathcal{L}_{\lambda},p)<0$.
Then $\lambda<\Sesh(S^\prime,L^\prime_\varepsilon,Z^\prime)$ and $\mathrm{DF}(\mathcal{X}^\prime,\mathcal{L}_{\lambda}^\prime,p^\prime)<0$
for sufficiently small $\varepsilon>0$.
\end{lemma}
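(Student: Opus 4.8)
The plan is to prove the statement by a two-parameter continuity argument, fixing $\lambda$ once it has been chosen with $0<\lambda<\sigma(S,L,Z)$ and then letting $\varepsilon\to0^+$. The construction preceding the statement already exhibits $(\mathcal{X}^\prime,\mathcal{L}_\lambda^\prime,p^\prime)$ as the slope test configuration of $(S^\prime,L^\prime_\varepsilon)$ centred at $Z^\prime$, so what remains is to verify that for small $\varepsilon$ this is an honest test configuration (i.e.\ $\mathcal{L}_\lambda^\prime$ is $p^\prime$-ample) and that its generalised Futaki invariant is still strictly negative.

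First I would control the Seshadri constant. Since $p\in S\setminus Z$, the blow-up $g\colon S^\prime\to S$ is an isomorphism over a neighbourhood of $Z$, so the proper transform $Z^\prime$ is isomorphic to $Z$ and disjoint from the exceptional curve $G$. Hence $L^\prime_\varepsilon\cdot Z^\prime=(g^*L-\varepsilon G)\cdot Z^\prime=L\cdot Z$, $(Z^\prime)^2=Z^2$ and $g(Z^\prime)=g(Z)$, and the Seshadri constant $\sigma(S^\prime,L^\prime_\varepsilon,Z^\prime)$ depends continuously on $\varepsilon$ with $\lim_{\varepsilon\to0^+}\sigma(S^\prime,L^\prime_\varepsilon,Z^\prime)=\sigma(S,L,Z)$. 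Because the hypothesis $0<\lambda<\sigma(S,L,Z)$ is a \emph{strict} inequality, there is $\varepsilon_1>0$ with $0<\lambda<\sigma(S^\prime,L^\prime_\varepsilon,Z^\prime)$ for all $0<\varepsilon<\varepsilon_1$; by Lemma~\ref{lemma:DF-slope} this makes $\mathcal{L}_\lambda^\prime$ $p^\prime$-ample, so $(\mathcal{X}^\prime,\mathcal{L}_\lambda^\prime,p^\prime)$ is a genuine test configuration.

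Next I would track the Futaki invariant. Applying Lemma~\ref{lemma:DF-slope} to $(S^\prime,L^\prime_\varepsilon,Z^\prime)$ and using that the intersection numbers and genus above are independent of $\varepsilon$ gives
\[
\mathrm{DF}(\mathcal{X}^\prime,\mathcal{L}_\lambda^\prime,p^\prime)=\tfrac{2}{3}\nu(L^\prime_\varepsilon)\big(\lambda^3Z^2-3\lambda^2L\cdot Z\big)+\lambda^2\big(2-2g(Z)\big)+2\lambda L\cdot Z,
\]
so the only $\varepsilon$-dependence enters through $\nu(L^\prime_\varepsilon)=\frac{-K_S\cdot L-\varepsilon}{L^2-\varepsilon^2}$. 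This is a rational function of $\varepsilon$ whose denominator is nonzero at $\varepsilon=0$, hence $\mathrm{DF}(\mathcal{X}^\prime,\mathcal{L}_\lambda^\prime,p^\prime)$ is continuous near $\varepsilon=0$ and, since $\nu(L^\prime_0)=\nu(L)$, its value there is exactly $\mathrm{DF}(\mathcal{X},\mathcal{L}_\lambda,p)$.

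Finally I would conclude by continuity: as $\mathrm{DF}(\mathcal{X},\mathcal{L}_\lambda,p)<0$ by hypothesis and the invariant is continuous in $\varepsilon$ with this negative limit, there is $\varepsilon_2>0$ with $\mathrm{DF}(\mathcal{X}^\prime,\mathcal{L}_\lambda^\prime,p^\prime)<0$ for $0<\varepsilon<\varepsilon_2$; taking $\varepsilon<\min(\varepsilon_1,\varepsilon_2)$ yields both conclusions. The only genuinely delicate point will be the convergence $\sigma(S^\prime,L^\prime_\varepsilon,Z^\prime)\to\sigma(S,L,Z)$: one must check that perturbing $L$ by $-\varepsilon G$ and blowing up a point off $Z$ does not introduce an obstruction to ampleness of $\mathcal{L}_\lambda^\prime$ nearer to $Z^\prime$ than the Seshadri bound permits, which is precisely where the hypothesis $p\notin Z$ is used. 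Everything else is a routine continuity argument in the single variable $\varepsilon$.
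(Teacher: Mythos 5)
Your proposal is correct and follows essentially the same route as the paper: fix $\lambda$, observe that the only $\varepsilon$-dependence of $\mathrm{DF}(\mathcal{X}^\prime,\mathcal{L}_\lambda^\prime,p^\prime)$ is through $\nu(L^\prime_\varepsilon)=\frac{-K_S\cdot L-\varepsilon}{L^2-\varepsilon^2}$ (using $p\notin Z$ so that the intersection numbers with $Z^\prime$ are unchanged), and conclude by continuity in $\varepsilon$, together with the convergence of the Seshadri constants to preserve $p^\prime$-ampleness. The paper's own argument is exactly this continuity-in-$\varepsilon$ computation, carried out in the discussion preceding the lemma, so there is nothing further to reconcile.
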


We conclude this section by posing the following conjecture:
\begin{conjecture}
\label{conjecture:mine}
Let $(X, L)$ be a  pair destabilised by a test configuration $(\mathcal X, \mathcal L)$ and let $\pi\colon X'\rightarrow X$ be a projective birational morphism. There is an ample line bundle $L'$ of $X'$, a destabilizing test configuration $(\mathcal X', \mathcal L')$ of $(X', L')$, a K-unstable pair $(X'', L'')$ destabilized by a test configuration $(\mathcal X'', \mathcal L'')$, a birational map $\phi\colon X\dashrightarrow X''$, a birational morphism $\psi:X'\rightarrow X''$ satisfying $\psi=\phi\circ\pi$ and $\psi(L')=L''$, and morphisms $f\colon \mathcal X'\rightarrow \mathcal X''$, $g\colon \mathcal X'\rightarrow \mathcal X$ such that $\mathcal L''=f_*(\mathcal L')$, $\mathcal L''=g_*(\mathcal L)$, $f|_F=\psi$ and $g|_F=\phi$, where $F$ is the general fibre of $\mathcal X'$.
\end{conjecture}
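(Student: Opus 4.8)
The plan is to prove the conjecture by reducing to the case of a single blow-up and then transporting the destabilising test configuration $(\mathcal X,\mathcal L)$ through its total space, exactly as in the construction preceding Lemma \ref{lemma:RT-blow-up}, but dropping the requirement that the centre avoid the destabilising locus. For surfaces any birational morphism $\pi\colon X'\to X$ is a finite composition of point blow-ups, and in general one may dominate $\pi$ by such a composition via resolution; an induction on the number of blow-ups therefore reduces the statement to the case where $\pi$ is the blow-up of a single smooth irreducible centre $C\subset X$, with exceptional divisor $G$.

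In this case I would construct $\mathcal X'$ by blowing up $\mathcal X$ along the closure $\overline{C\times(\mathbb P^1\setminus\{0\})}$, letting $f\colon\mathcal X'\to\mathcal X$ be the resulting contraction, $\mathcal E$ its exceptional divisor, and normalising if necessary. The general fibre of $\mathcal X'$ is then $\mathrm{Bl}_C X=X'$ and $f|_F$ recovers $\pi$. Setting $L'=\pi^*L-\varepsilon G$ and $\mathcal L'=f^*\mathcal L-\varepsilon\mathcal E$ for a small rational $\varepsilon>0$ makes $L'$ ample, and by \eqref{equation:DF-definition} the generalised Futaki invariant $\mathrm{DF}(\mathcal X',\mathcal L')$ is a quotient of polynomials in $\varepsilon$; the projection formula gives $\lim_{\varepsilon\to 0^+}\mathrm{DF}(\mathcal X',\mathcal L')=\mathrm{DF}(\mathcal X,\mathcal L)<0$. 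Whenever $\mathcal L'$ is genuinely $p'$-ample for small $\varepsilon$ — which holds, as in Lemma \ref{lemma:RT-blow-up}, when $C$ and its $\mathbb G_m$-orbits limit into the locus of $\mathcal X_0$ where $\mathcal L$ is ample — one obtains a valid destabilising configuration on $X'$, and the conjecture holds with $X''=X'$, $\psi=\mathrm{id}$ and $\phi=\pi^{-1}$.

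The subtlety — and the reason the conclusion is phrased with a birational map $\phi\colon X\dashrightarrow X''$ rather than $X''=X'$ — arises when $C$ meets the support of the central-fibre degeneration of $\mathcal X$, so that $\mathcal E$ meets the part of $\mathcal X_0$ along which $\mathcal L$ is only semiample. Then $\mathcal L'=f^*\mathcal L-\varepsilon\mathcal E$ may fail to be $p'$-ample for every $\varepsilon>0$, and the configuration on $X'$ itself is not valid. Here I would run a relative contraction over $\mathbb P^1$ removing the general-fibre curves responsible for the failure of ampleness: this yields $\psi\colon X'\to X''$ on general fibres, a contraction $f\colon\mathcal X'\to\mathcal X''$ of total spaces with $\mathcal L''=f_*\mathcal L'$, and $\phi=\psi\circ\pi^{-1}$, after which one checks that $\mathcal L''$ is $p$-ample and that $\mathrm{DF}(\mathcal X'',\mathcal L'')<0$ is preserved.

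The hard part will be precisely this last step. Unlike the situation of Lemma \ref{lemma:RT-blow-up}, where $p\notin Z$ guarantees both that the perturbed polarisation stays $p'$-ample for small $\varepsilon$ and that the relevant intersection numbers converge to those of $(\mathcal X,\mathcal L)$, here one must \emph{produce} the corrected model $X''$ and show that ampleness can be restored without destroying the negativity of the Futaki invariant. I expect this to require the minimal model program for $\mathcal X'$ relative to $\mathbb P^1$, together with a monotonicity estimate for $\mathrm{DF}$ under the pushforward $\mathcal L''=f_*\mathcal L'$ — the algebraic counterpart of the behaviour of the Mabuchi functional under birational contractions. It is conceivable that in some configurations instability genuinely cannot be transported to $X'$ and can only be realised on a strictly different model $X''$, which is exactly the flexibility the birational map $\phi$ is designed to allow, and which I regard as the central difficulty in establishing the conjecture in full generality.
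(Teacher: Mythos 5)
You should flag first that this statement is posed in the paper as a \emph{conjecture}: the paper contains no proof of it, only supporting evidence, namely Lemma \ref{lemma:RT-blow-up} (the Ross--Thomas extension of slope test configurations under blow-ups with centre off the slope curve) and the fact, quoted from \cite{Cheltsov-JMG-unstable-del-pezzos}, that the analogous statement holds for the flop slope configurations of Cheltsov--Rubinstein. Your proposal does not close this gap either, and you say so yourself: the entire difficult case --- when the centre $C$ meets the locus of the central fibre where $\mathcal L$ is only semiample --- is deferred to an unproved combination of a relative MMP on $\mathcal X'$ over $\mathbb P^1$ and a ``monotonicity estimate for $\mathrm{DF}$ under the pushforward $\mathcal L''=f_*\mathcal L'$''. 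Neither ingredient is established, and the second is precisely where the conjecture lives: $\mathrm{DF}$ is an invariant of the polarised pair being degenerated, not just of the total space, so contracting general-fibre curves changes $(X',L')$ into a genuinely different pair $(X'',L'')$, and there is no general principle asserting that negativity of the Futaki invariant survives such a contraction. Concretely, $f_*\mathcal L'$ need not even be $\mathbb Q$-Cartier or relatively ample on the contracted model without further argument, so the output of your MMP step is not a priori a test configuration at all. Assuming these points is assuming the conjecture.

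Two smaller inaccuracies are worth noting. First, your easy case ($C$ disjoint from the destabilising locus, concluding with $X''=X'$, $\psi=\mathrm{id}$) essentially reproduces the construction preceding Lemma \ref{lemma:RT-blow-up}, but that lemma is proved in the paper only for \emph{slope} test configurations on surfaces, where $p\notin Z$ gives both continuity of $\mathrm{DF}$ in $\varepsilon$ and convergence of the Seshadri constant, keeping $\lambda$ admissible; for an arbitrary test configuration your claim that $\mathcal L'=f^*\mathcal L-\varepsilon\mathcal E$ is $p'$-ample for small $\varepsilon$ needs its own verification (equivariance, flatness and normality of the blown-up total space included). Second, the breakdown in the hard case is sharper than a mere failure of ampleness: in the paper's own use of this circle of ideas (the inductive step of the proof of Theorem \ref{theorem:main}), blowing up a point \emph{on} $Z_n'$ changes the self-intersection of the slope curve to $-n-1$, and instability is recovered only by passing to $\mathbb F_{n+1}$ via an elementary transformation --- exactly the model change $\phi\colon X\dashrightarrow X''$ the conjecture allows. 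That worked example is the honest state of the art; your plan is a reasonable research programme consistent with it, but it is not a proof, and the paper does not claim one exists.
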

There is some additional evidence to support Conjecture \ref{conjecture:mine} beyond Lemma \ref{lemma:RT-blow-up}. For instance, it follows from \cite[Corollary 3.9]{Cheltsov-JMG-unstable-del-pezzos} that Conjecture \ref{conjecture:mine} holds for the \emph{flop slope} test configurations introduced by Cheltsov and Rubinstein in \cite{cheltsov-rubinstein-flops} when $\pi$ is not supported on the slope curve.
\section{Proofs}
\label{section:proofs}
\subsection{Hirzebruch surfaces}
We recall the basic geometry, positivity, equations and intersection theory of Hirzebruch surfaces. See \cite[Chapter IV]{Beauville1996} and \cite[Chapter V.2]{HartshorneAG} for the details. Denote by $\mathbb F_n:=\mathbb P(\mathcal O_{\mathbb P^1}\otimes\mathcal O_{\mathbb P^1}(n))\rightarrow \mathbb P^1$ the unique $n$-th Hirzebruch surface, where $n\in \mathbb Z_{\geqslant 0}$. This is the unique rational ruled surface whose Picard group is isomorphic to $\mathbb Z\oplus\mathbb Z$ and which contains a unique smooth rational curve of self-intersection $-n$, which is unique if $n>0$. We denote this curve by $Z_n$. We denote by $F$ the class of a fibre of the natural projection $\mathbb F_n\rightarrow \mathbb P^1$. We have that
$$Z_n\cdot F=1,\qquad Z_n^2=-n,\qquad F^2=0.$$
The Mori cone of effective curves $\NE(\mathbb F_n)$ is two dimensional and generated by $F$ and $Z_n$. 
In the special case where $n=0$, we have $\mathbb F_0\cong \mathbb P^1\times\mathbb P^1$ and the classes $F$ and $Z_0$ are the class of a fibre of each of the two different natural projections to $\mathbb P^1$. Moreover, we have
$$-K_{\mathbb F_n}\sim 2Z_n+(n+2)F,\qquad -K_{\mathbb F_n}\cdot F=2,\qquad -K_{\mathbb F_n}\cdot Z_n=2-n.$$
The Nef cone $\Nef(\mathbb F_n)$ is generated by $F$ and $aZ_n+(na)F$, i.e. $C\sim_{\mathbb Q}aZ_n+bF$ is ample if and only if $a>0$ and $b>na$.

We can also work with $\mathbb F_n$ using coordinates. Indeed, its loci is given by
$$\mathbb F_n=\{([x:y:z], [u:v])\in \mathbb P^2\times\mathbb P^1 \ | \ yv^n=zu^n\}.$$
In this coordinates the curve $Z_n$ is given by
$$Z_n=\{([1:0:0],[u:v]) \ | \ (u:v)\in \mathbb P^1\}\subset \mathbb F_n,$$
and the map $\mathbb F_n\rightarrow \mathbb P^1$ is given by projection on the second factor.

Given any rational surface $S\neq \mathbb P^2$, there is a morphism $S\rightarrow \mathbb F_n$ for some $n\geqslant 0$, which factors as the contraction of $k\geqslant 0$ $(-1)$-curves. In addition, there is a morphism $\mathbb F_1\rightarrow \mathbb P^2$ contracting $Z_1$. There is a commutative diagram
\begin{equation}
\label{eq:Fn-triangle}\begin{gathered}{\xymatrix{ & Y\ar@{->}[dl]_{\sigma_n}\ar@{->}[dr]^{\sigma_{n+1}} &\\
\mathbb F_n\ar@{-->}[rr]^{\phi_n}& &\mathbb F_{n+1},}
}\end{gathered}\end{equation}
where $\sigma_n$ is the blow-up of the point $p_n=([1:0:0],[1:0])\in \mathbb F_n$ and $\sigma_{n+1}$ is the contraction of the proper transform in $Y$ of the fibre of $\mathbb F_n\rightarrow \mathbb P^1$ passing through $p_n$. Conversely, up to isomorphism, $\sigma_{n+1}$ is the blow-up of $q_{n+1}=([0:0:1], [0:1])\in \mathbb F_{n+1}$ and $\sigma_n$ is the contraction of the proper transform in $Y$ of the fibre of $\mathbb F_{n+1}\rightarrow \mathbb P^1$ passing through $q_{n+1}$.

We can identify the additive group with the group of homogeneous polynomials of degree $n$ in variables $z_0,z_1$, i.e.  $(\mathbb G_a)^{n+1}\cong \mathbb C^{n+1}\cong \mathbb C[z_0,z_1]_n$ under this identification. We define the group homomorphism $\phi\colon\mathrm{GL}(2, \mathbb C)\rightarrow \mathrm{Aut}(\mathbb C^{n+1})$ given by 
$$\phi(M)(p(z_0,z_1))=p\left(M\cdot \left(z_0,z_1\right)^t\right).$$
Using $\phi$, we define the semi-direct product $G_n\coloneqq(\mathbb G_a)^{n+1}\rtimes \mathrm{GL}(2,\mathbb C)$
with product rule
$$(p,M)\cdot(q,N)=(q+p(N\cdot(z_0,z_1)^t), M\cdot N).$$

Let $n\geqslant 1$. The group $G_n$ acts on $\mathbb F_n$. Indeed, the element
$$\left(a_0z_0^n+a_1z_1z_0^{n-1}+\cdots +a_nz_1^n,\begin{pmatrix}
	a & b\\
	c & d
\end{pmatrix}\right)$$
acts on the point $([x:y:z],[u:v])\in \mathbb F_n$ by sending it to 
\begin{equation}\label{eq:action-coordinates}
\begin{gathered}
\begin{cases}
%+a_1u^{n-1}v
\left(\left[xu^n+y(a_0u^n+\cdots+a_nv^n):y(au+bv)^n:y(cu+dv)^n\right],\left[au+bv: cu+dv\right]\right), \text{ if }u\neq 0,\\
%+a_1u^{n-1}v
\left(\left[xv^n+z(a_0u^n+\cdots+a_nv^n):z(au+bv)^n:z(cu+dv)^n\right],\left[au+bv: cu+dv\right]\right), \text{ if }v\neq 0.
\end{cases}
\end{gathered}
\end{equation}
Let $\mu_n\triangleleft \mathrm{GL}(2,\mathbb C)$ be the finite subgroup consisting of the diagonal matrices $A=\lambda I$ where $\lambda$ is an $n$-th root of unity. We have an exact sequence of group homomorphisms
$$1\longrightarrow \mu_n\longrightarrow G_n\longrightarrow (\mathbb G_a)^{n+1}\rtimes(\mathrm{GL}(2,\mathbb C)/\mu_n)\rightarrow 1,$$
and it is easy to check that $\mu_n$ is the kernel of the action $G_n$ on $\mathbb F_n$. Hence $(\mathbb G_a)^{n+1}\rtimes(\mathrm{GL}(2,\mathbb C)/\mu_n)\subseteq\mathrm{Aut}(\mathbb F_n)$. While the above description is not entirely necessary to prove the following lemma, it will come useful when studying the automorphism groups in the blow-ups of $\mathbb F_n$.
\begin{lemma}
\label{lemma:hirzebruch-obstruction}
Let $S=\mathbb F_n$ for $n\geqslant 1$. Then $\mathrm{Aut}(S)=\mathrm{Aut}^0(S)\cong (\mathbb G_a)^{n+1}\rtimes (\mathrm{GL}(2,\mathbb C)/\mu_n)$. In particular, $\mathrm{Aut}^0(S)$ is not reductive and $S$ does not admit a cscK metric in any K\"ahler class.
\end{lemma}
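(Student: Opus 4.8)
The plan is to determine $\mathrm{Aut}(\mathbb F_n)$ completely and then feed the outcome into the two obstruction results already available. The inclusion $(\mathbb G_a)^{n+1}\rtimes(\mathrm{GL}(2,\mathbb C)/\mu_n)\subseteq\mathrm{Aut}(\mathbb F_n)$ is already established through the explicit action \eqref{eq:action-coordinates}, so the content of the first assertion is the reverse inclusion together with connectedness. The geometric starting point is that for $n\geqslant 1$ the curve $Z_n$ is the \emph{unique} irreducible curve of negative self-intersection on $\mathbb F_n$: any effective irreducible curve other than $Z_n$ is numerically $aF+bZ_n$ with $a\geqslant bn\geqslant 0$ and thus has non-negative square. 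Hence every $\sigma\in\mathrm{Aut}(\mathbb F_n)$ fixes $Z_n$, fixes $-K_{\mathbb F_n}$, and therefore fixes the class $F$ (the unique nef class with $F^2=0$ and $F\cdot Z_n=1$). Fixing $F$ means $\sigma$ preserves the ruling $\pi\colon\mathbb F_n\to\mathbb P^1$ defined by $|F|$, so $\sigma$ descends to an automorphism of the base and we obtain a homomorphism $\rho\colon\mathrm{Aut}(\mathbb F_n)\to\mathrm{Aut}(\mathbb P^1)=\mathrm{PGL}(2,\mathbb C)$.

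I would then compute the image and kernel of $\rho$. The map is surjective because the $\mathrm{GL}(2,\mathbb C)$-factor of the explicitly embedded subgroup already acts on the base as all of $\mathrm{PGL}(2,\mathbb C)$. The kernel consists of the automorphisms fixing every fibre of $\pi$; writing $\mathbb F_n=\mathbb P(\mathcal O_{\mathbb P^1}\oplus\mathcal O_{\mathbb P^1}(n))$, these are precisely the projectivised bundle automorphisms $\mathrm{Aut}(\mathcal O\oplus\mathcal O(n))/\mathbb C^*$, by the standard theory of ruled surfaces (\cite[Chapter IV]{Beauville}, \cite[Chapter V.2]{HartshorneAG}). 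Since $H^0(\mathcal O)=\mathbb C$, $H^0(\mathcal O(n))=\mathbb C[z_0,z_1]_n$ and crucially $H^0(\mathcal O(-n))=0$ for $n\geqslant 1$, the invertible global endomorphisms are lower triangular and their projectivisation is $(\mathbb G_a)^{n+1}\rtimes\mathbb C^*$; this is exactly the kernel $(\mathbb G_a)^{n+1}\rtimes(\mathbb C^*I/\mu_n)$ of $\rho$ restricted to the explicit subgroup. As the explicitly embedded subgroup thus has the same image and the same kernel under $\rho$ as $\mathrm{Aut}(\mathbb F_n)$ itself, it must equal $\mathrm{Aut}(\mathbb F_n)$. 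Being an extension of the connected group $\mathrm{PGL}(2,\mathbb C)$ by the connected group $(\mathbb G_a)^{n+1}\rtimes\mathbb C^*$, this group is connected, so $\mathrm{Aut}(\mathbb F_n)=\mathrm{Aut}^0(\mathbb F_n)\cong(\mathbb G_a)^{n+1}\rtimes(\mathrm{GL}(2,\mathbb C)/\mu_n)$, which is the first assertion.

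The two remaining statements are now formal. Applying Lemma \ref{lemma:reductive-trick} with $H=\mathrm{GL}(2,\mathbb C)$, which is connected and reductive, and $H'=\mu_n$, a finite normal subgroup, shows that $\mathrm{Aut}^0(\mathbb F_n)\cong(\mathbb G_a)^{n+1}\rtimes(H/H')$ is not reductive for $n\geqslant 1$. Finally, since the automorphism group is independent of the polarisation, the Matsushima--Lichn\'erowicz obstruction (Theorem \ref{theorem:matsushima-obstruction}) applies in every K\"ahler class: a cscK metric would force $\mathrm{Aut}^0(\mathbb F_n)$ to be reductive, contradicting what we have just shown. Hence $\mathbb F_n$ admits no cscK metric in any K\"ahler class.

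I expect the main obstacle to be the kernel computation: pinning down $\mathrm{Aut}(\mathcal O\oplus\mathcal O(n))/\mathbb C^*$ precisely, including its semidirect-product structure and the emergence of $\mu_n$ (rather than a larger or smaller finite group) when matching it against the explicit action \eqref{eq:action-coordinates}. This is also exactly where the hypothesis $n\geqslant 1$ is indispensable, through the vanishing $H^0(\mathcal O(-n))=0$: for $n=0$ this cohomology is non-zero, producing the extra off-diagonal automorphisms that reflect the second ruling of $\mathbb F_0=\mathbb P^1\times\mathbb P^1$ and break the argument.
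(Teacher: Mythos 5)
Your proposal is correct, but it follows a genuinely different route from the paper. The paper never uses the ruling: for $n\geqslant 2$ it contracts $Z_n$ to the vertex of the cone $S'=\mathbb P(1,1,n)$, identifies $\mathrm{Aut}(\mathbb F_n)\cong\mathrm{Aut}(S')$ via Lemma \ref{lemma:automorphism-blowup} (the vertex being the unique singular point, hence fixed), and then writes down all automorphisms of $\mathbb P(1,1,n)$ in weighted homogeneous coordinates, $[t_0:t_1:t_2]\mapsto[at_0+bt_1:ct_0+dt_1:et_2+p(t_0,t_1)]$; the case $n=1$ is treated separately through $\mathbb F_1\rightarrow\mathbb P^2$ and $\mathrm{Aut}(\mathbb P^2,p)$. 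Your argument instead exploits the extension $1\to\mathrm{Aut}(\mathcal O\oplus\mathcal O(n))/\mathbb C^*\to\mathrm{Aut}(\mathbb F_n)\xrightarrow{\ \rho\ }\mathrm{PGL}(2,\mathbb C)\to 1$ and a sandwich argument with the explicitly embedded subgroup. Each has its advantages: yours is uniform in $n\geqslant 1$, is standard ruled-surface theory, and isolates exactly where $n\geqslant 1$ enters (the vanishing $H^0(\mathcal O(-n))=0$); the paper's gives the coordinate description \eqref{eq:action-coordinates} of every automorphism, which is then reused in the proof of Theorem \ref{theorem:low-rank} to compute stabilisers of points. Two small points deserve care in your write-up. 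First, the step ``same image and same kernel implies equality'' needs the kernels to coincide as subgroups of $\mathrm{Aut}(\mathbb F_n)$, not merely to be abstractly isomorphic; this follows because $\ker(\rho|_H)$ is a closed connected subgroup of the connected group $\ker\rho\cong(\mathbb G_a)^{n+1}\rtimes\mathbb G_m$ of the same dimension $n+2$, hence equals it. Second, the identification $\ker\rho=\mathrm{Aut}(\mathcal O\oplus\mathcal O(n))/\mathbb C^*$ uses that the obstruction in $\mathrm{Pic}(\mathbb P^1)$ vanishes, i.e.\ that $E\cong E\otimes L$ forces $L\cong\mathcal O_{\mathbb P^1}$; this is automatic on $\mathbb P^1$ by comparing degrees, but it is exactly the kind of statement that fails over other bases, so it should be said. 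The final two assertions (non-reductivity via Lemma \ref{lemma:reductive-trick} and the cscK obstruction via Theorem \ref{theorem:matsushima-obstruction}) are handled exactly as in the paper.
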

\begin{proof}
The contraction of $Z_n$ induces a morphism $\pi\colon \mathbb F_n\rightarrow S'=\mathbb P(1,1,n)$. Suppose $n\geqslant 2$. Then $p=\pi(Z_n)=[0:0:1]$ is the unique singular point of $S'$ and $p$ must be fixed by $\mathrm{Aut}(S')$. The group $\mathrm{Aut}(S)$ acts transitively on the fibres $\mathbb F_n\rightarrow \mathbb P^1$ and it fixes $Z_n$, since it is the unique curve in $S$ of negative self-intersection. Hence, by Lemma \ref{lemma:automorphism-blowup} $\mathrm{Aut}(S)\cong \mathrm{Aut}(S')$. Any $\phi\in \mathrm{Aut}(S')$ can be given by
$$[t_0:t_1:t_2]\mapsto[at_0+bt_1\colon ct_0+dt_1\colon et_2+p(t_0:t_1)]$$
where $p$ is a homogeneous polynomial of degree $n$. Observe that $(\mathbb G_a)^{n+1}\triangleleft \mathrm{Aut}(S')$ is identified with the subgroup of automorphisms given by
$$[t_0:t_1:t_2]\mapsto[t_0\colon t_1\colon t_2+p(t_0:t_1)]$$
and $\mathrm{Aut}(S')/(\mathbb G_a)^{n+1}\cong H/\mathbb G_m$, where $H$ is given by automorphisms of type
$$[t_0:t_1:t_2]\mapsto[at_0+bt_1\colon ct_0+dt_1\colon et_2]$$
and the subgroup $\mathbb G_m$ consists of diagonal automorphisms $[t_0:t_1:t_2]\mapsto[a t_0\colon a t_1\colon at_2]$.
Hence $H/\mathbb G_m\cong \mathrm{GL}(2,\mathbb C)/\mu_m$ and
$$\mathrm{Aut}(S)\cong \mathrm{Aut}(S')\cong (\mathbb G_a)^{n+1}\rtimes (\mathrm{GL}(2,\mathbb C)/\mu_n)\cong\mathrm{Aut}^0(S),$$
since the group is connected. Now suppose $\pi\colon S=\mathbb F_n\rightarrow S'=\mathbb P^2$ is the blow-up of $p=[0:0:1]\in \mathbb P^2$. By Lemma \ref{lemma:automorphism-blowup},
$$\mathrm{Aut}(\mathbb F_1)\cong \mathrm{Aut}(\mathbb P^2, p)=\left\{\begin{pmatrix}
	1 & a_0 & a_1\\
	0 & a 	& b\\
	0 & c		& d
\end{pmatrix}\in \mathrm{GL}(3,\mathbb C)\right\}.$$
We have a group isomorphism $\mathrm{Aut}(\mathbb F_1)\rightarrow (\mathbb G_a)^2\rtimes \mathrm{GL}(2,\mathbb C)$, given by
$$\begin{pmatrix}
	1 & a_0 & a_1\\
	0 & a 	& b\\
	0 & c		& d
\end{pmatrix} \mapsto \left(az_0+a_1z_1, \begin{pmatrix}
a 	& b\\
c		& d
\end{pmatrix}\right).$$
Non-reductivity of $\mathrm{Aut}(\mathbb F_1)$ follows from Lemma \ref{lemma:reductive-trick} and the fact that $\mathrm{GL}(2,\mathbb C)$ is linearly reductive \cite[Corollary 4.44]{mukai-book-moduli}.
\end{proof}
The last lemma and the following one give a complete answer to the existence of cscK metrics on Hirzebruch surfaces.
\begin{lemma}
\label{lemma:k-polystable-minimal}
Let $S\cong \mathbb P^2$ or $S\cong \mathbb P^1\times\mathbb P^1$. Then $S$ admits a cscK metric in any K\"ahler class. In particular $(S, L)$ is K-polystable for all $\mathbb Q$-ample line bundles $L$. 
\end{lemma}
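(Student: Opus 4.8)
The plan is to produce an explicit constant scalar curvature representative in each K\"ahler class on the two surfaces by hand, and then deduce K-polystability from the already-known direction of the Yau--Tian--Donaldson correspondence. Since both $\mathbb P^2$ and $\mathbb P^1\times\mathbb P^1$ are homogeneous, every K\"ahler class should in fact contain a very symmetric (indeed homogeneous) metric, and symmetry will force the scalar curvature to be constant; the two cases differ only in how explicitly one writes this metric down.

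First I would treat $S=\mathbb P^2$. Here $\Pic(\mathbb P^2)\cong\mathbb Z$ is generated by $\mathcal O(1)$, so the K\"ahler cone is the half-line $\mathbb R_{>0}\cdot c_1(\mathcal O(1))$ and every K\"ahler class has the form $t\,c_1(\mathcal O(1))$ for some $t>0$. The Fubini--Study metric $\omega_{FS}$ is K\"ahler--Einstein, hence cscK, and rescaling by $t>0$ multiplies its (constant) scalar curvature by $1/t$, which is again constant. Thus $t\,\omega_{FS}$ is a cscK metric in the class $t\,c_1(\mathcal O(1))$, and every K\"ahler class on $\mathbb P^2$ contains one.

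Next I would treat $S=\mathbb P^1\times\mathbb P^1$. The key point is that every K\"ahler class on $S$ splits as a product: by the K\"unneth formula $H^{1,1}(S,\mathbb R)$ is spanned by the pullbacks $\pi_1^*[\omega_{FS}]$ and $\pi_2^*[\omega_{FS}]$ of the Fubini--Study class on each factor, and the ample cone is the positive quadrant, so every K\"ahler class equals $a\,\pi_1^*[\omega_{FS}]+b\,\pi_2^*[\omega_{FS}]$ with $a,b>0$ (matching the description of $\Nef(\mathbb F_0)$ recorded above). The corresponding product metric $a\,\pi_1^*\omega_{FS}+b\,\pi_2^*\omega_{FS}$ is a Riemannian product of two round metrics on $\mathbb P^1$, each of constant scalar curvature; since scalar curvature is additive under Riemannian products, this metric is cscK. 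Hence every K\"ahler class on $\mathbb P^1\times\mathbb P^1$ contains a cscK metric as well.

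Finally, the \emph{in particular} clause follows immediately: the existence of a cscK metric in $c_1(L)$ implies that $(S,L)$ is K-polystable, by the theorem of Berman--Darvas--Lu quoted in the introduction. The essential content of the lemma is thus the construction of the symmetric metrics above, and no step presents a genuine obstacle; the only points requiring mild care are verifying that every K\"ahler class on the quadric is truly a product class (so that the product construction applies) and recalling the additivity of scalar curvature for Riemannian products. If one preferred a uniform argument, one could instead average an arbitrary K\"ahler metric in a given class over a maximal compact subgroup of $\mathrm{Aut}(S)$; since this subgroup acts transitively on $S$, the resulting invariant metric has constant scalar curvature, covering both cases at once.
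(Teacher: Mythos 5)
Your proof is correct and follows essentially the same route as the paper: rescale the Fubini--Study metric on $\mathbb P^2$ (using that its K\"ahler cone is one-dimensional), take scaled products of constant curvature metrics on the two $\mathbb P^1$ factors for the quadric, and deduce K-polystability from the cscK-implies-K-polystable theorem of Berman--Darvas--Lu. Your closing remark about averaging over a transitive maximal compact subgroup is a valid alternative not present in the paper, but the core argument coincides with it.
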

\begin{proof}
The Picard group of $\mathbb P^n$ has $\mathrm{rk}(\mathrm{Pic}(\mathbb P^n))=1$ for all $n\geqslant 1$ and the K\"ahler cone is one-dimensional. Therefore any ample line bundle $L=\mathcal O_{\mathbb P^n}(aH)$ for some $a\in \mathbb N$, where $H$ is the class of a hyperplane. The Fubini-Study metric $g_{FS}$ on $\mathbb P^n$ is a K\"ahler--Einstein metric and therefore a cscK metric in $c_1(3H)$. Hence, $\frac{a}{3}g_{FS}$ is a cscK metric in $c_1(L)$. In particular this applies to $\mathbb P^2$. If $L\cong  p_1^*\mathcal O_{\mathbb P^1}(aH)\otimes p_2^*\mathcal O_{\mathbb P^1}(bH)$ is ample and $g_a$, $g_b$ are cscK metrics in $c_1(\mathcal O_{\mathbb P^1}(aH))$ and $c_1(\mathcal O_{\mathbb P^1}(bH))$, then $g_a+ g_b$ is a cscK metric in $c_1(\mathcal O_{\mathbb P^1\times \mathbb P^1}(L))$. Since an $L$-polarised surface $S$ admitting a cscK metric is K-polystable \cite{Berman-Darvas-Lu-csck-implies-K-stability, Darvas-Rubinstein-csck-implies-K-stability} and K-polystability is invariant under scaling of $L$, the lemma follows.
\end{proof}
Of course, the above lemma is well known and it can be proved in many less direct ways. For instance, since they are toric surfaces, one can use Donaldson's theory \cite{Donaldson-toric-surfaces-K-stability, Donaldson-scalar-curvature-K-stability-toric}. However, we include the above proof since it is one of the few instances where one could give an explicit description of the metric.

\subsection{Proof of Theorem \ref{theorem:main}}
While the previous section completes the classification for Hirzebruch surfaces, it does not say anything of an arbitrary rational surface. For this reason we take the following alternative take to Lemma \ref{lemma:hirzebruch-obstruction} via destabilising test configurations:
\begin{lemma}
\label{lemma:Hirzebruch}
The pair $(\mathbb F_n, L)$, where $n\geqslant 1$ and $L$ is any ample $\mathbb Q$-divisor is destabilised by the slope test configuration $(\mathcal X, \mathcal L_\lambda)$ of $(\mathbb F_n, L)$ centred at $Z_n$.
\end{lemma}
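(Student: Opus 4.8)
The plan is to write $L$ in the Mori-cone basis and reduce the claim to a one-variable sign analysis of the generalised Futaki invariant supplied by Lemma~\ref{lemma:DF-slope}. Since $Z_n$ is the unique curve of negative self-intersection we have $n\geqslant 1$ (the case $n=0$, i.e. $\mathbb{P}^1\times\mathbb{P}^1$, being covered by Lemma~\ref{lemma:k-polystable-minimal}). Using the description of the ample cone of $\mathbb{F}_n$ I would write $L\sim_{\mathbb Q} aZ_n+bF$ with $a>0$ and $b>na$, and record the intersection numbers $Z_n^2=-n$, $g(Z_n)=0$,
$$d:=L\cdot Z_n=b-na>0,\qquad L^2=a(2b-na)>0,\qquad -K_{\mathbb F_n}\cdot L=2a-na+2b.$$
In particular $-K_{\mathbb F_n}\cdot L>0$ (using $b>na$), so the slope $\nu(L)=\frac{2a-na+2b}{a(2b-na)}$ is positive.

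First I would pin down the range of admissible $\lambda$. For a smooth divisor the blow-up of $\mathbb F_n$ along $Z_n$ is an isomorphism with exceptional divisor $Z_n$ itself, so the definition of the Seshadri constant reduces to $\Sesh(\mathbb F_n,L,Z_n)=\sup\{\lambda : L-\lambda Z_n \text{ is ample}\}$. Since $L-\lambda Z_n\sim_{\mathbb Q}(a-\lambda)Z_n+bF$ is ample precisely when $a-\lambda>0$ (the inequality $b>n(a-\lambda)$ being automatic for $\lambda\geqslant 0$), this supremum equals $a$. Thus, by Lemma~\ref{lemma:DF-slope}, $\mathcal L_\lambda$ is a genuine ($p$-ample) slope test configuration exactly for $0<\lambda<a$.

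Next I would substitute this data into the formula of Lemma~\ref{lemma:DF-slope} and factor out $\lambda$, obtaining $\DF(\mathcal X,\mathcal L_\lambda)=\lambda\,B(\lambda)$ with
$$B(\lambda)=-\tfrac{2n}{3}\nu(L)\,\lambda^2+\big(2-2\nu(L)d\big)\lambda+2d.$$
As $\lambda>0$ throughout the admissible range, destabilisation is equivalent to finding $\lambda\in(0,a)$ with $B(\lambda)<0$. Because $\nu(L)>0$ and $n\geqslant1$, the leading coefficient is negative, so $B$ is a downward parabola with $B(0)=2d>0$; it therefore suffices to show that $B$ is negative at the right endpoint $\lambda=a$ and invoke continuity.

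The heart of the argument — and the step I expect to be the only real obstacle — is the sign computation at $\lambda=a$. Clearing denominators (writing $D:=2b-na=L^2/a>0$) I would verify the identity
$$\tfrac{3D}{2}\,B(a)=an\big((n+1)a-2b\big).$$
Since $a>0$, $n\geqslant 1$ and $b>na$ give $(n+1)a-2b<(n+1)a-2na=a(1-n)\leqslant 0$, this forces $B(a)<0$. By continuity $B<0$ on an interval $(a-\delta,a)\subset(0,a)=(0,\Sesh(\mathbb F_n,L,Z_n))$; choosing any rational $\lambda$ there yields a $p$-ample slope test configuration with $\DF(\mathcal X,\mathcal L_\lambda)=\lambda B(\lambda)<0$, so $(\mathbb F_n,L)$ is destabilised. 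The only delicate point is the bookkeeping in the identity above; everything else is a routine unwinding of Lemma~\ref{lemma:DF-slope} together with the positivity $-K_{\mathbb F_n}\cdot L>0$, which both guarantees $\nu(L)>0$ and makes $B$ open downward.
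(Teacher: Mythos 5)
Your proof is correct and takes essentially the same route as the paper: both write $L=aZ_n+bF$, compute $\Sesh(\mathbb F_n,L,Z_n)=a$, evaluate the Futaki invariant of the slope test configuration at the endpoint $\lambda=a$ — your identity $\tfrac{3D}{2}B(a)=an\big((n+1)a-2b\big)$ is exactly the paper's $\DF(\mathcal X,\mathcal L_a)=\tfrac{2a^2n}{3}\cdot\tfrac{(n+1)a-2b}{2b-na}$ after multiplying by $a$ — and conclude by continuity that $\DF(\mathcal X,\mathcal L_\lambda)<0$ for some rational $0<\lambda<a$. The only cosmetic differences are your factorisation $\DF(\mathcal X,\mathcal L_\lambda)=\lambda B(\lambda)$ and the (unneeded) observation that $B$ is a downward parabola.
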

\begin{proof}
Let $L=aZ_n+bF$ be an ample $\mathbb Q$-divisor class of $S=\mathbb F_n$. In particular $a>0$ and $b>na>0$ and 
$$\nu(L)=\frac{-K_S\cdot L}{L^2}=\frac{(2-n)a+2b}{2ab-a^2n}.$$
Let $Z=Z_n$. Then we have
$$L\cdot Z=b-na,\qquad Z^2=-n,\qquad L-\lambda Z\equiv (a-\lambda)Z+bF,$$
and $L-\lambda Z$ is ample if and only if $a>\lambda>\frac{na-b}{n}$, so $\Sesh(X, L,Z)=a$.

Let $(\mathcal X, \mathcal L_\lambda)$ be the slope test configuration of $(X,L)$ centred at $Z$. Substituting  with $L, Z$ and $\lambda=\Sesh(X,L,Z)=a$ in Lemma \ref{lemma:DF-slope} we obtain
\begin{align*}
\DF(\mathcal X, \mathcal L_a )	&=\frac{2}{3}\cdot\frac{(2-n)a+2b}{2ab-na^2}\Big(-3a^2(b-na)-a^3n\Big)+2a^2+2a(b-na)\\
										&=\frac{2a}{3}\cdot\frac{-6ab+4a^2n+7nab-2n^2a^2-6b^2}{2b-na}+2a\frac{(a+b-na)\cdot(6b-3na)}{3(2b-na)}\\
										&=\frac{2a}{3}\cdot\frac{-6ab+4a^2n+7nab-2n^2a^2-6b^2}{2b-na}+2a\frac{6ab+6b^2-9nab-3na^2+3n^2a^2}{3(2b-na)}\\
										&=\frac{2a^2n}{3}\cdot \frac{(a-2b+na)}{2b-na}\\
										&<\frac{2a^2n}{3}\cdot \frac{(a-na)}{2b-na}\\
										&=\frac{2a^3n}{3}\cdot \frac{(1-n)}{2b-na}\leqslant 0,
\end{align*}
as $b>na>0$ and $n\geqslant 1$. Hence $\DF(\mathcal X, \mathcal L_a )<0$ for all $n\geqslant 1$. Since $\DF(\mathcal X, \mathcal L_\lambda )$ is a continuous function on $\lambda$, for some $0<\lambda<\Sesh(S,L,Z)=a$, we have $\DF(\mathcal X, \mathcal L_\lambda )<0$ and $(S, L)$ is K-unstable.
\end{proof}

\begin{remark}
After circulating a version of this article, D. Calderbank let the author know that in \cite{Fourauthors-Extremal-metrics-stability} a similar strategy is followed (namely the use of a destabilising slope test configuration) to show the obstruction to the existence of extremal metrics on certain ruled surfaces, generalising a technique of Sz\'ekelyhidi \cite{szekelyhidi-extremal-metrics-and-k-stability}.
\end{remark}

%\begin{lemma}
%Let $\pi\colon S'\rightarrow S$ be the blow-up of a smooth point $p\in S$. Let $L$ be an ample $\mathbb Q$-divisor of $S$ and $Z\subset S$ be an irreducible curve such that $Z$ is not singular at $p$ and let $Z'$ be the proper transform of $Z$ in $S'$. If $(S,L)$ is K-destabilised by the slope test configuration $(\mathcal X, \mathcal L_\lambda)$ of $(S,L)$ centred at $Z$, then there is an ample $\mathbb Q$-divisor $L'\sim_{\mathbb Q}\pi^*(L)-aE$ for some $a>0$ such that $(S',L')$ is destabilised by the slope test configuration $(\mathcal X', \mathcal L'_\lambda)$ of $(S',L')$ centred at $Z'$. Furthermore, if $L$ is an ample $\mathbb Q$-divisor then we can choose $L'$ to be an ample $\mathbb Q$-divisor.
%\end{lemma}
\begin{proof}[Proof of Theorem \ref{theorem:main}]
Suppose $S\neq \mathbb P^2$ and $S\neq \mathbb P^1\times \mathbb P^1$. We need to find an ample $\mathbb Q$-line bundle $L$ such that $(S,L)$ is K-unstable. We will proceed by induction on the rank $k=\mathrm{rk}(\mathrm{Pic}(S))\geqslant 2$. By the classification of rational surfaces we have that there is a morphism
$f\colon S=S_k\rightarrow S_{k-1}\rightarrow \cdots \rightarrow S_2\eqqcolon\overline S=\mathbb F_n$ which is a composition of blow-ups $S_i\rightarrow S_{i-1}$ where $\mathrm{rk}(\mathrm{Pic}(S_i))=i$ (or equivalently a composition of contractions of $(-1)$-curves). We may assume that $n\geqslant 1$. Indeed, if $n=0$, then the last blow-up is $f_2\colon S_3\rightarrow S_2=\mathbb P^1\times\mathbb P^1$ where $S_3$ is the smooth del Pezzo surface of degree $7$ which has three $(-1)$-curves $E_1, E_2, E_3$ where $E_2\cdot E_3=E_2\cdot E_1=1$ and $E_1\cdot E_3=0$ and $f_2$ is the contraction of $E_2$. However, we may replace $f_2$ by the contraction of $E_1$ and then $f_2\colon S_3\rightarrow \mathbb F_1$. 

If $k=2$, then $f$ is an isomorphism and the proof follows from Lemma \ref{lemma:Hirzebruch}. Suppose $k=3$ and $f\colon S=S_3\rightarrow \mathbb F_n$ is the blow-up at a point $p$. If $p\not\in Z_n\subset \mathbb F_n$, then the proof follows from lemmas \ref{lemma:Hirzebruch} and \ref{lemma:RT-blow-up}. If $p\in Z_n$, let $E\subset S_k$ be the $f$-exceptional curve and $F$ be the proper transform of the unique fibre of $\mathbb F_n\rightarrow \mathbb P^1$. Then $F$ is a $(-1)$-curve and its contraction gives a morphism $f'\colon S=S_3\rightarrow \mathbb F_{n+1}$ which is the blow-up of a point $q\not\in Z_{n+1}\subset \mathbb F_{n+1}$ and then the proof is as in the case $p\not\in Z_n$. In particular, we have proven that for $k=3$, there is a morphism $f\colon S_k\rightarrow \mathbb F_{n}$ which is an isomorphism around $Z_n$, and there is an ample $\mathbb Q$-line bundle $L$ in $S_k$ such that $(S,L)$ is destabilised by the slope test configuration centred at the proper transform of $Z_n$.

Now, for the induction step, we suppose that we have an ample $\mathbb Q$-line bundle $L$ on $S_{k-1}$ and a composition of blow-ups $h\colon S_{k-1}\rightarrow \mathbb F_n$ such that $(S_{k-1}, L)$ is destabilised by the slope test configuration centred at the proper transform of $Z_n$ in $S_{k-1}$ and that $h$ is an isomorphism around $Z_{n}$. We let $f\colon S=S_k\rightarrow S_2=\mathbb F_n$ factor as $f=h\circ \pi$, where $h\colon S_{k-1}\rightarrow \mathbb F_n$ is as in the induction hypothesis and $\pi\colon S_k\rightarrow S_{k-1}$ is the blow-up at a point $p\in S_{k-1}$ with exceptional divisor $E\subset S_k$. Let $Z_n'$ be the proper transform of $Z_n$ in $S_{k-1}$ via $h$.
If $p\not\in Z_n'$, then the result follows from Lemma \ref{lemma:RT-blow-up}.
%If $p\in Z_n'$, as $h$ is an isomorphism around $Z_n$ by the induction hypothesis, there is a commutative diagram
%$$\xymatrix{S_k\ar@{->}[r]^{g}\ar@{->}[d]^{\pi}&S'\ar@{->}[d]_{\sigma}\ar@{->}[dr]^{\sigma'}&\\
%S_{k-1}\ar@{->}[r]^{h}&\mathbb F_n\ar@{-->}[r]^{\phi_n}&\mathbb F_{n+1},}$$
%where $g$ contracts the proper transform in $S_k$ of the exceptional locus of $h$ in $S_k$, $\sigma$ is the contraction of the proper transform of $E$ in $S_1$. We can let $q=f(p)=\sigma\circ g(E)$ and $F$ be the proper transform of the unique fibre in $\mathbb F_n$ through $q$. We then let $\sigma'$ to be the contraction of $F$. Hence, $\sigma\circ g$ is an isomorphism around $Z'$ and 

Hence, suppose that $p\in Z_n'$ and let $F_{k-1}$ (respectively $F_k$) be the proper transform in $S_{k-1}$ (respectively $S_k$) of the fibre $F$ of $\mathbb F_n\rightarrow \mathbb P^1$ passing through $h(p)$. Notice that $F_k^2=F_{k-1}^2-1$. Denote by $C_i$ (respectively $\widetilde C_i$) the proper transform in $S_{k-1}$ (respectively $S_k$) of the exceptional divisor of $S_i\rightarrow S_{i-1}$ for $i=3,\cdots , k-1$. Let $F_i$ be the proper transform of $F$ in $S_i$ and let $l$ be the smallest index such that $F_l^2=-1$. Observe that if $l=k$ then $h$ is an isomorphism near $Z_n$ and $F$. By the induction hypothesis $\widetilde C_i^2=C_i^2$. The latter allows us to define the morphism $g\colon S_k\rightarrow S'$  to a smooth surface $S'$ as the successive contraction of $\widetilde C_k, \ldots, \widetilde C_{l+1}$, where we define $g$ to be the identity morphism if $k=l$. Then $(g(F_k))^2=(g(E))^2=-1$. Let $\pi'\colon S'\rightarrow S''$ be the contraction of $g(F_k)$ to some smooth surface $S''$. Let $Z_n''=(\pi'\circ g)(\widetilde Z_n)\subset S''$. Notice that $(\widetilde Z_n)^2=(Z_n'')^2=-n-1$.

Hence, the composition $\pi'\circ g$ is an isomorphism around $\widetilde Z_n'$. Let $E''=(\pi'\circ g)(E)$, $C_l''=(\pi'\circ g)(\widetilde C_l), \ldots, C_3''=(\pi'\circ g)(\widetilde C_3)$. By the inductive hypothesis $E\cdot C_i''=C_i''\cdot Z_n''=0$, for $i=3, \ldots, l$. Let $g'\colon S''\rightarrow S'''$ be the successive contraction of $C_l'', \ldots, C_3''$. Hence $(g'(Z_n''))^2=(Z_n'')^2=-n-1$ and $\rk(\Pic)(S''')=2$. Hence $S'''\cong\mathbb F_{n+1}$, $g'(Z_n'')=Z_{n+1}$ and $g'\circ\pi'\circ g\colon S_k\rightarrow \mathbb F_{n+1}$ is an isomorphism around $Z_{n+1}$, completing the proof of the inductive statement. In particular, given any rational surface $S$, we deduce that there is a morphism $S\rightarrow \mathbb F_{m}$ for some $m\geqslant 1$ such that $S$ is an isomorphism around $Z_m$, and an ample $\mathbb Q$-line bundle $L$ on $S$ such that,  by means of Lemma \ref{lemma:RT-blow-up}, we construct a destabilising slope test configuration for $(S, L)$. The result for $S=\mathbb P^1\times\mathbb P^1$ and $S=\mathbb P^2$ follows from Lemma \ref{lemma:k-polystable-minimal} and the fact that a pair $(S, L)$ admitting a cscK metric is K-polystable \cite{Berman-Darvas-Lu-csck-implies-K-stability, Darvas-Rubinstein-csck-implies-K-stability}.
\end{proof}

\begin{remark}
If Conjecture \ref{conjecture:mine} holds, we may expect a similar approach to answer Question \ref{que:main} for other birational classes of surfaces as the one presented in the last proof. Namely, given a pair $(S, L)$, we may apply the Minimal Model Programme to find a morphism $S\rightarrow S_0$ where $S_0$ is a smooth surface with no $(-1)$-curves. Then we may classify all surfaces $S_0$ with no $(-1)$-curves such that $\StabK(S_0)\neq \Amp^{\mathbb Q}(S_0)$ and apply a solution to Conjecture \ref{conjecture:mine}. For all those surfaces with $\StabK(S_0)=\Amp^{\mathbb Q}(S_0)$, we may attempt to show that if enough (infinitely closed) points in $S_0$ are blown up, we may end-up with a morphism $g\colon S'\rightarrow S_0$ such that $\StabK(S')\neq \Amp^{\mathbb Q}(S')$. Ideally, we should be able to describe the \emph{smallest} such $S'$ (in the case of rational surfaces $g$ is the identity, except for the special cases of $\mathbb F_1$ and $\mathbb F_0$). While a similar approach may also be considered in higher dimensions, a stronger statement than the one in Conjecture \ref{conjecture:mine} would be needed to account for flips and flops.
\end{remark}

\subsection{Some totally unstable rational surfaces}

The Matsushima-Lichn\'erowicz obstruction does not seem to have been fully explored in all obvious cases. There are two reasons for this: on the one hand it is not easy to describe the automorphism groups of varieties with many symmetries (i.e. those which are likely to have non-reductive automorphism groups), especially in higher dimensions. On the other hand, even when the birational class of a projective variety is well understood (e.g. rational surfaces), there may be many varieties in the class with very different automorphism groups depending on the choice of birational transformations among them. As a result, a complete classification of the cases for which Theorem \ref{theorem:matsushima-obstruction} is applicable may be hopeless. Nonetheless we can conclude this article by proving Theorem \ref{theorem:low-rank}, which can be stated in simple terms:
\begin{proof}[Proof of Theorem \ref{theorem:low-rank}]
The statement on $\mathbb F_n$ follows from Lemma \ref{lemma:hirzebruch-obstruction} or Lemma \ref{lemma:Hirzebruch}. Consider \eqref{eq:Fn-triangle}. We are interested in describing $\mathrm{Aut}^0(Y)$. By Lemma \ref{lemma:automorphism-blowup}, we are either looking for $\mathrm{Aut}^0(\mathbb F_n, p_n)$ or $\mathrm{Aut}^0(\mathbb F_{n+1}, q_{n+1})$. By plugging $p_n=([x:0:0],[u:0])$ with $x\neq 0, u\neq 0$ or $q_{n+1}=([0:0:z],[0:v])$ with $z\neq 0, v\neq 0$ in \eqref{eq:action-coordinates} as fixed points, we get that the elements of $\mathrm{Aut}^0 (\mathbb F_n,p_n)\cong\mathrm{Aut}^0(Y)\cong \mathrm{Aut}^0(\mathbb F_{n+1}, q_{n+1})$ are of the form
$$\left(a_0z_0^{n}+a_1z_1z_0^{n-1}+\cdots+a_nz_1^n, \begin{pmatrix}
	a & b\\
	0 & d
\end{pmatrix}/\mu_n\right),
$$
where $a_0,\cdots, a_n, b\in \mathbb C$, $a,b\in \mathbb C^*$, computed by considering the stabiliser of $p_n$. Hence $\mathrm{Aut}^0(Y)\cong(\mathbb G_a)^{n+1}\rtimes((\mathbb G_a\rtimes\mathbb G_m^2)/\mu_n)$, which is not reductive by Lemma \ref{lemma:reductive-trick}. The statement on the non-existence of cscK metrics follows from Theorem \ref{theorem:matsushima-obstruction}. Observe that if $\Pic(S)\leqslant 3$, then $S$ is a toric surface. Therefore, the non-existence of a cscK metric is equivalent, by the solution of the toric version of Conjecture \ref{conjecture:YTD} to toric-equivariant K-instability \cite{Donaldson-toric-surfaces-K-stability}. Hence there is an equivariant destabilising test configuration for $(S, L)$. But any equivariant test configuration is a test configuration in the sense of Definition \ref{definition:TC} and hence $(S, L)$ is K-unstable.
%Equivalently, if we compute $\mathrm{Aut}^0(\mathbb F_{n+1}, q_{n+1})$, we get that its elements are of the form
%$$\left(a_0z_0^{n}+a_1z_1z_0^{n-1}+\cdots+a_nz_1^n, \begin{pmatrix}
	%a & 0\\
	%c & d
%\end{pmatrix}/\mu_n\right),
%$$
%modulo $\mu_{n+1}$, where $a_0,\cdots, a_n, c\in \mathbb C$, $a,b\in \mathbb C^*$. Hence $\mathrm{Aut}^0(Y)\cong \mathbb C_n$.
\end{proof}

\bibliographystyle{amsalpha}
\bibliography{bibliography}
%\printbibliography
%\AtNextBibliography{\bibliofont}
%\printbibliography

\end{document}